\documentclass[12pt,reqno]{amsart}

\advance \topmargin by -\headheight
\advance \topmargin by -\headsep
\evensidemargin \oddsidemargin
\usepackage{amsmath}
\usepackage{bbm}
\usepackage{amsfonts}
\usepackage{amssymb}
\usepackage{amsthm}
\usepackage{csquotes}
\usepackage{color}
\usepackage{mathrsfs}
\usepackage{dsfont}
\usepackage{bm}
\usepackage{cite}
\usepackage{enumerate}
\usepackage{booktabs}
\usepackage[mathscr]{eucal}
\usepackage{mathtools}
\usepackage{CJKutf8}

\numberwithin{equation}{section}

\newtheorem{theorem}{Theorem}[section]
\newtheorem{corollary}[theorem]{Corollary}
\newtheorem{lemma}[theorem]{Lemma}
\newtheorem{proposition}[theorem]{Proposition}
\newtheorem{fact}[theorem]{Fact}
\theoremstyle{definition}

\newtheorem{example}[theorem]{Example}

\newcommand{\R}{\mathbb R}
\newcommand{\T}{\mathbb T}
\newcommand{\Z}{\mathbb Z}

\newcommand{\one}{\mathbbm 1}
\newcommand{\dd}{\,\mathrm d}

\title[Freiman's $3k-4$ theorem in connected abelian groups]{Geometric lifting and Freiman's $3k-4$ theorem in compact connected abelian groups}
\author{Yifan Jing}
\address{Department of Mathematics, The Ohio State University, Columbus OH, 43210, USA}
\email{jing.245@osu.edu}
\author{Yuchen Meng}
\address{Department of Mathematics, The Ohio State University, Columbus OH, 43210, USA}
\email{meng.730@osu.edu}
\subjclass[2020]{Primary 11B30; Secondary 22C05, 43A05}

\begin{document}

\begin{abstract}
We develop a geometric lifting method for inverse sumset problems in compact connected abelian groups. The first result is an analogue of Freiman's $3k-4$ theorem, that is every compact set $A\subseteq G$ of sufficiently small Haar measure satisfying $\mu_G(A+A)<3\mu_G(A)$ is contained in a one dimensional Bohr set of measure at most $\mu_G(A+A)-\mu_G(A)$. This resolves a question of Christ and Iliopoulou. The proof combines Bilu's theorem with a geometric refinement of the spillover argument.  We also establish a sharp projection theorem. Under a continuous surjective homomorphism with connected kernel, a compact set of sufficiently small positive measure and doubling at most $K$, where $2\le K<3$, has image of doubling at most $2K-2$, and this factor is best possible. Further consequences include variants of the $3k-4$ theorem for popular sumsets and an inverse theorem for Tao's convolution inequality.
\end{abstract}

\maketitle

\section{Introduction}\label{sec:introduction}
The purpose of this paper is to develop a geometric lifting method for sets of small doubling in compact connected abelian groups. For a compact group $G$, we write $\mu_G$ for its normalized Haar measure. We write $\T=\mathbb{R}/\mathbb{Z}$ and equip it with normalized Lebesgue measure. A one dimensional Bohr set in $G$ is a set of the form $\chi^{-1}(I)$, where $\chi:G\to\T$ is a continuous surjective homomorphism and $I\subseteq\T$ is a compact arc. 

\subsection{Freiman's $3k-4$ theorem}

Let $G$ be a compact connected abelian group. Kneser's theorem~\cite{Kneser} asserts that for any nonempty compact $A\subseteq G$ one has
\[
    \mu_G(A+A)\geq \min\{1,2\mu_G(A)\}.
\]
This can be viewed as a continuous generalization of the Cauchy--Davenport theorem. Kneser also studied the equality case, and equality occurs when $A$ is a one-dimensional Bohr set. Such sets are the continuous counterparts of the one dimensional arithmetic progressions in Vosper's inverse Cauchy--Davenport theorem.

The structure of sets for which equality in Kneser's theorem nearly occurs was clarified by Tao~\cite{TaoKneser}. With the density assumptions, his inverse theorem shows that sets satisfying $\mu_G(A+A)\leq (2+o(1))\mu_G(A)$ are close in measure to one dimensional Bohr sets. Christ and Iliopoulou~\cite{CI22} obtained a quantitative containment theorem for sets with $\mu_G(A+A)\leq (2+\eta)\mu_G(A)$ with a small positive absolute constant $\eta>0$. A different proof of Christ--Iliopoulou theorem via a spillover argument is later given by the first author and Tran in \cite{JingTranKIP}, where they prove a more general result for nonabelian groups as well. The abelian case is refined by the first author~\cite{J} and he obtained an inverse theorem with the same one dimensional Bohr set conclusion for $\eta$ is roughly $0.3$. 

Thus one has a conclusion for doubling at most $2+\eta$, with the permitted $\eta>0$. On the other hand, two-dimensional structure can occur beyond the one-dimensional range, with natural endpoint is $3$ (or $\eta =1$), as in the classical Freiman $3k-4$ theorem over $\Z$. See Example~\ref{eq: 3} for details. 

In the same paper, Christ and Iliopoulou suggested that the precise inverse results for the circle should extend to arbitrary compact connected abelian groups for sets with small measures. We confirm their conjecture.  The following is an asymmetric version for general $G$. In particular, $G$ need not be metrizable.

\begin{theorem}\label{thm: main Freiman}
For every $\tau\geq1$ there is a constant $\eta_\tau>0$ with the following property. Let $G$ be a compact connected abelian group and let $A,B\subseteq G$ be compact sets satisfying
\begin{equation*}
 0<\tau^{-1}\mu_G(A)\leq\mu_G(B)\leq\mu_G(A)\leq\eta_\tau.
\end{equation*}
Suppose
\begin{equation*}
 \mu_G(A+B)<\mu_G(A)+2\mu_G(B),
\end{equation*}
then there exist a continuous surjective homomorphism $\chi:G\to\T$ and compact arcs $I,J\subseteq\T$ such that $A\subseteq \chi^{-1}(I)$, $B\subseteq \chi^{-1}(J)$, and
\begin{equation*}
\mu_\T(I)\leq \mu_G(A+B)-\mu_G(B),\qquad \mu_\T(J)\leq \mu_G(A+B)-\mu_G(A). 
\end{equation*}
\end{theorem}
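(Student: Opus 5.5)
The plan is to reduce to the circle, or more precisely to a finite-dimensional torus, and there apply Bilu's theorem together with a geometric spillover argument. The argument has four steps.

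\textbf{Step 1: reduction to tori.} Since $A$ and $B$ are compact and $\mu_G(A+B)<\mu_G(A)+2\mu_G(B)$ is a strict inequality, a standard approximation argument lets us pass to a quotient. We choose a closed subgroup $H$ such that $G/H$ is a finite-dimensional torus $\T^d$, and replace $A,B$ by their images. Concretely, we approximate $A$ and $B$ from outside by unions of $H$-cosets, taking $H$ deep in the inverse system $G=\varprojlim \T^{d}$ (compact connected abelian groups are inverse limits of tori, dually because $\widehat G$ is torsion-free). The strict inequality survives a small enough perturbation. Once we find $\chi:\T^d\to\T$ and arcs for the thickened sets, composing with the quotient map gives the statement for $A$ and $B$. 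The measure bounds on $I$ and $J$ then follow by letting the thickening shrink and using a compactness argument over the countably many possible characters with bounded norm. I expect this step to be routine.

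\textbf{Step 2: finding an approximate one-dimensional structure.} On $\T^d$, with $\mu(A),\mu(B)$ small, we lift to $\R^d$ locally, or discretize to $(\Z/N\Z)^d$ with $N$ large, and apply Bilu's theorem, i.e.\ Freiman's $3k-4$ theorem in its asymmetric form for torsion-free or lattice settings. The relevant regime is doubling less than $3$ with small density, where Bilu's theorem gives a large proportion of $A$ and $B$ contained in short generalized progressions of dimension one. The smallness $\eta_\tau$ is used here to exclude wraparound effects. Transferring back, this yields a character $\chi$ such that a substantial portion (say $1-\epsilon$) of $A$ and $B$ lies in $\chi^{-1}(I_0)$ and $\chi^{-1}(J_0)$ for arcs of length $O_\tau(\mu(A))$. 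Alternatively, for this step we can invoke the known $2+\eta$ results of \cite{J,JingTranKIP} after a sieving step.

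\textbf{Step 3: spillover to full containment, the main obstacle.} We must upgrade from "most of $A$ lies in a Bohr set" to "all of $A$ lies in a Bohr set", with the sharp measure bound. Push forward by $\chi$: the fibre measures $f_A(t)=\mu_{\ker\chi}((A-x_t)\cap \ker\chi)$ give functions on $\T$, and $\mu(A+B)\ge$ a continuous analogue of Freiman's $3k-4$ lower bound applied to $\chi(A),\chi(B)$, combined with Kemperman/Kneser in each fibre. The geometric refinement proceeds as follows. Suppose some point $a\in A$ lies outside the natural arc. Then $a+B$ is almost disjoint from $A'+B$, where $A'$ is the bulk of $A$, and we would obtain
\[
\mu(A+B)\ge \mu(A'+B)+\mu(B)(1-o(1)).
\]
With the $3k-4$ bound for $A'+B$ in the circle, this forces a contradiction with $\mu(A+B)<\mu(A)+2\mu(B)$. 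The delicate point is that $\mu_{\T}(\chi(A))$ could be far larger than $\mu(A)$ when the fibres are thin. So we should first show, using Kneser in $\ker\chi$ (which is connected after choosing $\chi$ primitive), that fibre densities are near $1$ on most of the arc. Only then do we run the one-dimensional $3k-4$ theorem for the circle (the Christ--Iliopoulou/Candela--de Roton type result) on the compact sets $\chi(A),\chi(B)\subseteq\T$.

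\textbf{Step 4: final bounds.} Once $\chi(A)$ and $\chi(B)$ are known to have small sumset relative to their measures, the circle version gives arcs $I\supseteq\chi(A)$ and $J\supseteq\chi(B)$ with $\mu(I)\le \mu_\T(\chi(A)+\chi(B))-\mu_\T(\chi(B))$. Combining this with
\[
\mu_G(A+B)\ge \mu_\T(\chi(A)+\chi(B))-\bigl(\mu_\T(\chi(B))-\mu_G(B)\bigr)
\]
(fibrewise Kneser) yields the stated inequalities $\mu_\T(I)\le\mu_G(A+B)-\mu_G(B)$ and its analogue for $J$.
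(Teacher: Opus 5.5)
Your Steps 3 and 4, which are where the sharp lengths are supposed to come from, do not work. The inequality you attribute to fibrewise Kneser,
\[
\mu_G(A+B)\ge\mu_\T(\chi(A)+\chi(B))-\bigl(\mu_\T(\chi(B))-\mu_G(B)\bigr),
\]
is false. So is the premise that $\chi(A),\chi(B)$ have doubling small enough for a circle $3k-4$ theorem.

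Take the set from Proposition~\ref{prop: sharp}: $A=B=([0,h]\times\T)\cup(P\times\{0\})\subseteq\T^2$ with $P=[0,h]\cup(K-1)h\,\mathcal C$, $2<K<3$, and $\chi(x,y)=x$. Then:
\begin{itemize}
\item $\mu(A)=\mu_\T(\chi(A))=h$ and $\mu(A+A)=Kh<3h$;
\item the fibres are full over $\chi(A)$ off a null set;
\item $\chi$ is the correct character, since $\chi(A)\subseteq[0,(K-1)h]$ and $(K-1)h=\mu(A+A)-\mu(A)$.
\end{itemize}
Yet $\chi(A)+\chi(A)=[0,2(K-1)h]$. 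Your inequality would therefore read $Kh\ge 2(K-1)h$, which is false for every $K>2$. Moreover, $\chi(A)$ has doubling $2K-2\ge 3$ once $K\ge 5/2$, so no circle $3k-4$ theorem applies.

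The arc is stretched from $[0,h]$ to $[0,(K-1)h]$ entirely by measure-zero fibres. An argument run through $\mu_\T(\chi(\cdot))$ or fibre densities cannot see these fibres.

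The spillover in Step 3 fails for the same reason. If $\chi(a)$ lies just beyond the arc carrying the bulk $A'$, then $\chi(a)+\chi(B)$ lies mostly inside $\chi(A')+\chi(B)$, so $a+B$ need not be almost disjoint from $A'+B$. In the example, $a+A\subseteq A'+A$ up to a null set. The gain is only of the size of the overshoot of the arc, and that overshoot is exactly the quantity you must track. Invoking the $2+\eta$ theorems in Step 2 does not rescue this either, since they do not reach doubling near $3$.

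The missing ingredient is a lower bound that charges $A+B$ against $\mu_G(A)$ and the full length $h_B$ of the shortest arc containing $\chi(B)$, rather than against $\mu_\T(\chi(B))$. This is the paper's Proposition~\ref{prop: strong kneser}:
\[
\mu_G(A+B)\ge\max\{\mu_G(A)+h_B,\ \mu_G(B)+h_A\},
\]
valid when $h_A+h_B<1$ and $\mu_G(A)+\mu_G(B)>\max\{h_A,h_B\}$. Its proof runs as follows.
\begin{itemize}
\item Lift to $\widetilde G=\{(g,t)\in G\times\R:\rho(g)=t+\Z\}$.
\item Quotient by $b\Z$, via $q:\widetilde G\to L=\widetilde G/b\Z$, where $b\in\widetilde B$ lies over the far endpoint of the arc of $B$.
\item Use $\widetilde A+\widetilde B\supseteq\widetilde A\cup(\widetilde A+b)$ to gain one point on every fibre over $q(\widetilde A)+q(\widetilde B)$.
\item Observe that $q(\widetilde A)+q(\widetilde B)=L$ by pigeonhole.
\end{itemize}
In this way the null fibres are counted through the endpoint $b$.

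Alternatively, your own Step 1 already suffices if you drop Steps 2--4. Bilu's theorem is a statement on $\T^d$ with exact containment and the sharp lengths (Fact~\ref{fact:toral}). It therefore gives a character $\psi_\varepsilon=\chi\circ\pi$ of $G$ and arcs whose lengths are within $\varepsilon$ of the target.

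To remove the $\varepsilon$, your phrase ``countably many characters of bounded norm'' is not meaningful here: the torus changes with $H$, and $\widehat G$ may be uncountable. Finiteness does hold, however. Take $\eta_\tau\le 1/8$, so the arcs have length at most $1/4$. If $A\subseteq\psi^{-1}(I)$ with $\mu_\T(I)\le 1/4$, then $\bigl|\int_A e^{2\pi i\psi(x)}\dd\mu_G(x)\bigr|\ge\mu_G(A)/\sqrt2$. By Parseval, at most $2/\mu_G(A)$ characters qualify. Hence some $\psi$ recurs as $\varepsilon\to0$, and the shortest arcs containing $\psi(A)$ and $\psi(B)$ then have lengths at most $\mu_G(A+B)-\mu_G(B)$ and $\mu_G(A+B)-\mu_G(A)$.
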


Taking $B=A$ and $\tau=1$ gives the compact connected $3k-4$ theorem in its usual form: every compact set $A$ of sufficiently small positive measure satisfying $\mu_G(A+A)<3\mu_G(A)$ is contained in a one dimensional Bohr set of measure at most $\mu_G(A+A)-\mu_G(A)$. In the usual $3k-4$ theorem over $\Z$, the set $A$ is contained in a one-dimensional arithmetic progression $P$ with $|P|\leq |A+A|-|A|+1$. Here the arithmetic progression is replaced by a one-dimensional Bohr set. The endpoint correction $+1$ disappears as the endpoints of an arc have measure $0$ preimages under a surjective character. 

Kneser's theorem also covers the disconnected case. The structure of pairs satisfying $\mu_G(A+B)<\mu_G(A)+\mu_G(B)$ in this setting was described by Kemperman~\cite{KempermanKneser}, and an inverse theorem of $(2+\eta)$-type was proved by Griesmer~\cite{Griesmer}. There is also a nonabelian analogue of Kneser's theorem for locally compact groups, due to Kemperman~\cite{Kemperman}. For connected groups, the equality case and an inverse theorem of $(2+\eta)$-type were established by the first author and Tran~\cite{JingTranKIP}. 

\subsection{The geometric lifting argument}

Our proof combines Bilu's $3k-4$ theorem for finite dimensional tori~\cite{Bilu} with a geometric refinement of the spillover and projection-and-lifting argument used by the first author and Tran in~\cite[Section~11]{JingTranKIP}, which is different from the earlier approaches by Tao and by Christ--Iliopoulou. Roughly speaking, spillover argument provides a way to control the doubling in the projection. Let $\pi: G\to Q$ be a group homomorphism, and one may hope that $\mu_G(A+A)<3\mu_G(A)$ would imply $\mu_Q(\pi A+\pi A)<3\mu_Q(\pi A)$ and one may then apply induction on dimension to control $\pi(A)$. This approach has two issues. First, one can construct sets with doubling less than $3$ in $G$ but doubling close to $4$ in the projection, even when all of $G,H,\ker\pi$ are connected. We will discuss this phenomenon in details in our next main theorem. Second, even if the doubling of $\pi(A)$ is less than $3$, $\mu_Q(\pi A + \pi A) - \mu_Q(\pi A)$ can be way larger than our target value $\mu_G(A+A)-\mu_G(A)$. Indeed, this is the approach used in \cite{J} where the first author refines the spillover argument so that when the doubling of $A$ is less than $2.3$, the doubling of the projection is at most $3$ and one can also control  $\mu_Q(\pi A + \pi A) - \mu_Q(\pi A)$ by $\mu_G(A)$. The above two issues show the limitation of this approach.

The new ingredient in this work is a two step projection estimate and a fine control on the lifting step using the metric on the quotient. More precisely, we consider $\pi_1: G\to \T^d$ and $\pi_2: \T^d\to\T$. The first homomorphism is given by Gleason--Yamabe type result, and as the kernel can be made in an arbitrarily small open neighborhood, one can control the doubling in $\T^d$, which fixes the first issue mentioned in the previous paragraph. Bilu's theorem in $\T^d$ then provides a character $\pi_2$ to push down to $\T$. Two issues both occur again for $\pi_2$: the doubling in $\T$ can be too large and  $\mu_\T(\pi_2 A + \pi_2 A) - \mu_\T(\pi_2 A)$ is not controlled. The final piece is to use metric in $\T$. Bilu's theorem in $\T^d$ ensure that the projection of sets in $\T$ under $\pi_2$ still behaves like one-dimensional and unwrapping, though the doubling can be large. Hence one can lift $G$ and $\T$ to their universal covers and use geometry in $\R$ to control the length of the interval (more precisely, use the order in $\R$ to bound the end points of the sumset). Similar lifting idea is also used by the first author and Mudgal~\cite[Lemma 3.1]{JM}. This is presented in Section 3.

It is worth noting that the sufficiently small measure restriction in Theorem~\ref{thm: main Freiman} comes from Bilu's theorem in $\T^d$. Our method lifts the quotient result back to $G$, so the density restriction in the finite dimensional theorem is preserved. When $A=B$, once the initial Bohr containment has been obtained, the lifting argument itself only requires $\mu_G(A)<1/4$. We therefore expect that the conclusion of our $3k-4$ theorem should hold whenever $\mu_G(A)<1/4$ and $\mu_G(A+A)<3\mu_G(A)$. On the other hand, the restriction at $1/4$ is unavoidable; see the example in~\cite{CandelaDeRoton}.

\subsection{Small doubling under projection}

A second consequence of the method is a sharp bound on the doubling of the image of a small doubling set under projection. Let
\[
    0\to H\to G\xrightarrow{\pi}Q\to0
\]
be a short exact sequence of compact abelian groups, and let $A\subseteq G$ be compact with positive measure. It is well known that $\mu_G(A+A)/\mu_G(A)\leq K$ does not imply that the doubling of $\pi(A)$ is also at most $K$. Kong, Peng, and Tran~\cite{KPT} proved that
\[
    \frac{\mu_Q(\pi(A)+\pi(A))}{\mu_Q(\pi(A))}\leq K^2,
\]
and showed that this bound is sharp in general.

On the other hand, when the exact sequence is a sequence of compact connected abelian groups, one may expect a better control on the doubling of the projection set. Indeed, using Kneser's theorem~\cite{Kneser}, one can show that $\mu_G(A+A)/\mu_G(A)\leq 2$ implies $\mu_Q(\pi(A)+\pi(A))/\mu_Q(\pi(A))\leq 2$. Our first theorem proves a sharp projection theorem for doubling less than $3$ in compact connected abelian groups. 

\begin{theorem}
\label{thm:projection sym}
There is a constant $\eta>0$ with the following property. Let $2\leq K<3$, and let $0\to H\to G\xrightarrow{\pi} Q\to 0$ be an exact sequence of compact connected abelian groups. Suppose $A\subseteq G$ is compact and $0<\mu_G(A)<\eta$, and
\[
\mu_G(A+A)\leq K\mu_G(A)
\]
Then either $\pi(A)+\pi(A) = Q$, or we have
\[
\mu_Q(\pi(A)+\pi(A))\leq2(\mu_G(A+A)-\mu_G(A))\leq (2K-2)\mu_Q(\pi(A)). 
\]
\end{theorem}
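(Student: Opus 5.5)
Apply Theorem~\ref{thm: main Freiman} with $B=A$ and $\tau=1$, taking $\eta\le\eta_1$. Since $K<3$, we have $\mu_G(A+A)<3\mu_G(A)$, so there are a continuous surjective homomorphism $\chi:G\to\T$ and a compact arc $I\subseteq\T$ with $A\subseteq\chi^{-1}(I)$ and
\[
\mu_\T(I)\le \mu_G(A+A)-\mu_G(A)=:\delta .
\]
Two cases arise, according to whether $\chi$ is trivial on $H$.

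\textbf{Case 1: $\chi|_H$ is nontrivial.} Then $\chi(H)$ is a nontrivial connected subgroup of $\T$, hence $\chi(H)=\T$. I will show that this forces $\pi(A)+\pi(A)=Q$, or at least that it cannot occur when $\mu_G(A)$ is small. Restricted to each coset $x+H$, the map $\chi$ is surjective onto $\T$, while $A$ is confined to the thin slab $\chi^{-1}(I)$. For $q\in Q$, write $A_q$ for the fibre of $A$ over $q$. Inside the coset $\pi^{-1}(q)$, this fibre lies in the preimage of $I$ under a surjective character of $H$, so $\mu_H(A_q)\le\mu_\T(I)$. Integrating over $Q$ gives
\[
\mu_G(A)\le \mu_\T(I)\,\mu_Q(\pi A)\le \delta\,\mu_Q(\pi A).
\]
This only says that $\pi A$ is large, which is the conclusion we want to get in this case. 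To finish, I would then use Kneser's theorem on $Q$ together with a fibre estimate: for $q,q'\in\pi A$, the fibre $(A+A)_{q+q'}$ contains $A_q+A_{q'}$, and its measure is at least $\max(\mu_H(A_q),\mu_H(A_{q'}))$. I expect this case to yield $\pi A+\pi A=Q$ or a contradiction with $\mu_G(A+A)<3\mu_G(A)$. Handling this case cleanly is the main obstacle, and it may need the structure of $A$ inside the Bohr set rather than just containment. An alternative I would try first is to replace $\chi$ by a character that factors through $Q$, using the fact that the Bohr set produced in the proof of Theorem~\ref{thm: main Freiman} comes from projecting to $\T^d$; one could choose that projection compatibly with $\pi$ when doubling is preserved.

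\textbf{Case 2: $\chi|_H$ is trivial.} Then $\chi=\psi\circ\pi$ for a continuous surjective homomorphism $\psi:Q\to\T$. Hence $\pi(A)\subseteq\psi^{-1}(I)$, so $\pi(A)+\pi(A)\subseteq\psi^{-1}(I+I)$. If $\mu_\T(I)\ge 1/2$ we are in the regime where the sumset may be all of $\T$. But $\mu_\T(I)\le\delta<2\mu_G(A)<2\eta$ is small, so $\mu_\T(I+I)=2\mu_\T(I)$ and therefore
\[
\mu_Q(\pi A+\pi A)\le 2\mu_\T(I)\le 2(\mu_G(A+A)-\mu_G(A)).
\]

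For the second inequality, I need $\mu_G(A+A)-\mu_G(A)\le (K-1)\mu_Q(\pi A)$. Since $\mu_G(A+A)-\mu_G(A)\le (K-1)\mu_G(A)$, it suffices to show $\mu_G(A)\le \mu_Q(\pi A)$. This holds because $A\subseteq\pi^{-1}(\pi A)$ and $\mu_G(\pi^{-1}(S))=\mu_Q(S)$ for every measurable $S\subseteq Q$.
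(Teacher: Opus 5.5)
Your application of Theorem~\ref{thm: main Freiman} with $B=A$ and $\tau=1$ is correct, and so are your Case~2 and the closing comparison $\mu_G(A)\le\mu_Q(\pi(A))$. This is the paper's route: the symmetric statement is the case $A=B$ of Theorem~\ref{thm:projection}. The gap is Case~1. You derive the right inequality, but then leave the case open and propose a Kneser/fibre-sumset argument or a re-choice of the character. Neither is needed, and the second cannot work in general (see below). The missing step is one line. Because $K<3$,
\[
\delta=\mu_G(A+A)-\mu_G(A)\le (K-1)\mu_G(A)<2\mu_G(A),
\]
so your bound $\mu_G(A)\le\delta\,\mu_Q(\pi(A))$ gives
\[
\mu_Q(\pi(A))\ge\frac{\mu_G(A)}{\delta}\ge\frac{1}{K-1}>\frac12 .
\]
Hence for every $x\in Q$ the compact sets $\pi(A)$ and $x-\pi(A)$ have measures summing to more than $1$. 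They therefore intersect, i.e.\ $x\in\pi(A)+\pi(A)$. So Case~1 always lands in the first alternative $\pi(A)+\pi(A)=Q$.

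Case~1 cannot be excluded by a contradiction, as you hoped. Take $G=\T^2$, $H=\{0\}\times\T$, $\pi(x,y)=x$, and $A=\T\times[0,h]$ with $h$ small. Then $\mu_G(A+A)=2\mu_G(A)$. The only characters $\chi$ with $A\subseteq\chi^{-1}(I)$ and $\mu_\T(I)\le h$ are $\chi(x,y)=\pm y$, and these are nontrivial on $H$. Every character factoring through $Q$ has the form $mx$ and maps $A$ onto $\T$, so your ``compatible projection'' alternative fails here; meanwhile $\pi(A)=Q$.

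The paper's write-up of this step asserts $\chi|_H=0$ by contradiction, using $\mu_Q(\pi(A))<\varepsilon_\tau$ ``by the assumption''. That is not among the hypotheses and is false in this example. The chain displayed there, $\mu_G(A)\le\mu_Q(\pi(A))\mu_\T(I)<2\mu_Q(\pi(A))\mu_G(A)$, should be read as the bound $\mu_Q(\pi(A))>1/2$ above. That bound is exactly what the ``either $\pi(A)+\pi(A)=Q$'' clause absorbs.
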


We also prove an asymmetric version involving two sets $A$ and $B$, see Theorem~\ref{thm:projection}. In Section~5, we construct examples showing that the factor $2K-2$ in Theorem~\ref{thm:projection sym} is sharp for every $2\leq K<3$. We also give examples with disconnected kernel for which the estimate in Theorem~\ref{thm:projection sym} fails. Thus the connectedness of $H$ is necessary for the conclusion. This should be distinguished from the geometric lifting estimate in
Section~3, which allows disconnected character kernels.

\subsection{Popular sumsets and Tao's convolution inequality}

We present two consequences of Theorem~\ref{thm: main Freiman}. The first concerns popular sumsets. For $t>0$, let
\[
\mathscr{S}_t(A,B) =\{ x\in A+B : \one_A*\one_B (x)\geq t\}
\]
be the popular sumset of $A$ and $B$, and we write $S_t(A,B)=\mu_G(\mathscr{S}_t(A,B))$. 

\begin{theorem}
\label{thm: popular Freiman}
For every $\tau\geq1$ there is $\eta_\tau>0$ with the following property. For every $0<\alpha\leq\eta_\tau$ and $0<\kappa,\varepsilon<1$, there exists $t_0=t_0(\tau,\alpha,\kappa,\varepsilon)>0$ such that the following holds. Let $G$ be connected compact abelian group, and let $A,B\subseteq G$ be compact with
\[
 \alpha\leq\mu_{G}(A)\leq\eta_\tau,
 \qquad \tau^{-1}\mu_{G}(A)\leq \mu_{G}(B)\leq \mu_{G}(A).
\]
If $0<t\leq t_0$ and
\[
 S_t(A,B)\leq \mu_{G}(A)+(2-\kappa)\mu_{G}(B),
\]
then there exist a continuous surjective homomorphism $\chi:G\to\T$ and compact arcs $I,J\subseteq\T$ such that $\mu_{G}(A\setminus\chi^{-1}(I))\leq\varepsilon \mu_{G}(B)$, $\mu_{G}(B\setminus\chi^{-1}(J))\leq\varepsilon \mu_{G}(B)$, and
\[
 \mu_\T(I)\leq S_t(A,B)-\mu_{G}(B)+\varepsilon \mu_{G}(B),\qquad \mu_\T(J)\leq S_t(A,B)-\mu_{G}(A)+\varepsilon \mu_{G}(B).
\]
\end{theorem}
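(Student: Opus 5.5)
The plan is to reduce Theorem~\ref{thm: popular Freiman} to Theorem~\ref{thm: main Freiman} by a removal step. We pass from the popular sumset to an honest sumset of large subsets of $A$ and $B$, apply the compact connected $3k-4$ theorem to those subsets, and then absorb the discarded mass into the $\varepsilon$-errors.

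\textbf{Step 1 (reduction to a genuine sumset).} We want compact subsets $A'\subseteq A$, $B'\subseteq B$ with
\[
\mu_G(A\setminus A')\le\varepsilon'\mu_G(B),\qquad \mu_G(B\setminus B')\le\varepsilon'\mu_G(B),
\]
and
\[
\mu_G(A'+B')\le S_t(A,B)+\varepsilon'\mu_G(B).
\]
For this we use an arithmetic removal lemma on compact abelian groups, i.e.\ the continuous analogue of Green's arithmetic removal lemma. Since $\alpha\le\mu_G(A)$, it can be applied with parameters depending only on $(\tau,\alpha,\varepsilon')$. Set $C=G\setminus\mathscr S_t(A,B)$ (or a slight compact enlargement). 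The number of solutions to $a+b=c$ with $a\in A$, $b\in B$, $c\in C$ has measure
\[
\int_C \one_A*\one_B \le t\le t_0 .
\]
When $t_0$ is small enough in terms of $\varepsilon'$ and $\alpha$, removal gives sets of measure at most $\varepsilon'\mu_G(B)$ whose deletion kills every such solution. Then $A'+B'\subseteq \mathscr S_t(A,B)$ up to the removed part of $C$, which gives the desired bound.

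A possible obstruction is that removal in non-metrizable $G$ needs care. We would first reduce to a metrizable quotient via a Gleason--Yamabe/Bohr-set approximation, or else prove the needed weak removal directly by Fourier-analytic regularity in $\widehat G$. In either case the removal constants depend only on densities. We would also take $A',B'$ compact by inner regularity.

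\textbf{Step 2 (applying Theorem~\ref{thm: main Freiman}).} Choose $\varepsilon'$ small relative to $\kappa$, for example $\varepsilon'<\kappa/10$. The sets $A',B'$ then satisfy
\[
\mu_G(A'+B') < \mu_G(A')+2\mu_G(B'),
\]
because we had slack $\kappa\mu_G(B)$ and lost only $O(\varepsilon')\mu_G(B)$. Their measure ratios satisfy $\mu_G(B')\ge (2\tau)^{-1}\mu_G(A')$. If $\mu_G(B')>\mu_G(A')$ after removal, we swap roles or trim slightly. Measures stay below $\eta_{2\tau}$, so we take the $\eta_\tau$ of this theorem to be $\eta_{2\tau}$ from Theorem~\ref{thm: main Freiman}. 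The theorem gives $\chi$ and arcs $I,J$ with $A'\subseteq\chi^{-1}(I)$, $B'\subseteq \chi^{-1}(J)$ and
\[
\mu_\T(I)\le \mu_G(A'+B')-\mu_G(B') \le S_t(A,B)-\mu_G(B)+2\varepsilon'\mu_G(B),
\]
with the analogous bound for $J$. Taking $\varepsilon'=\varepsilon/2$ yields all the stated conclusions, since $\mu_G(A\setminus\chi^{-1}(I))\le\mu_G(A\setminus A')$.

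\textbf{Main obstacle.} The main obstacle is Step 1: getting removal with loss measured in $\mu_G(B)$ uniformly over all compact connected $G$, possibly non-metrizable, with $t_0$ depending only on $(\tau,\alpha,\kappa,\varepsilon)$. I would first try the Fourier-analytic/Bohr-set arithmetic regularity route, which works in any compact abelian group. Failing that, I would push forward to a finite-dimensional torus quotient via Gleason--Yamabe, where the kernel lies in a small neighbourhood, and pull the removal sets back.
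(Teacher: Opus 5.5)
Your proposal is correct and is essentially the paper's argument. The paper applies the Candela--Szegedy--Vena arithmetic removal lemma, stated for all compact Hausdorff abelian groups, to $A$, $B$ and $G\setminus\mathscr S_t(A,B)$; so no metrizable reduction or separate Fourier-analytic removal is needed. It then takes compact inner approximations $C\subseteq A$, $D\subseteq B$ with $\mu_G(C+D)\le S_t(A,B)+r$ and invokes Theorem~\ref{thm: main Freiman} with parameter $2\tau$, where $r=\frac{\alpha}{20\tau}\min\{\kappa,\varepsilon\}$ and $t_0=\mathcal R(r)$. The only bookkeeping fix is that your $\varepsilon'$ must satisfy both of your constraints at once, so take e.g.\ $\varepsilon'=\min\{\varepsilon/2,\kappa/10\}$ rather than $\varepsilon'=\varepsilon/2$.
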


In \cite[Theorem 1.1]{TaoKneser}, Tao proved a generalization of Kneser's inequality, that for $A,B\subseteq G$ are compact and $G$ is compact connected abelian,
\[
\frac{1}{t}\int_G \min\{\one_A*\one_B(x),t\} \dd\mu_G(x)\geq \min\{\mu_G(A)+\mu_G(B)-t,1\}. 
\]
Here we obtained a strong inverse theorem of the inequality of $3k-4$ type.  

\begin{theorem}\label{thm: pop truncated}
With the same density and parameters assumptions of Theorem~\ref{thm: popular Freiman}, if
\[
 \frac{1}{t}\int_G \min\{\one_A*\one_B(x),t\} \dd\mu_G(x)\leq \mu_G(A)+(2-\kappa)\mu_G(B),
\]
then there are a continuous surjective homomorphism $\chi:G\to\T$ and compact arcs $I,J$ satisfying $\mu_{G}(A\setminus\chi^{-1}(I))\leq\varepsilon \mu_{G}(B)$, $\mu_{G}(B\setminus\chi^{-1}(J))\leq\varepsilon \mu_{G}(B)$, and
\begin{align*}
    \mu_\T(I)\leq  \frac{1}{t}\int_G \min\{\one_A*\one_B(x),t\} \dd\mu_G(x) - \mu_G(B) +\varepsilon\mu_G(B),\\
    \mu_\T(J)\leq  \frac{1}{t}\int_G \min\{\one_A*\one_B(x),t\} \dd\mu_G(x) - \mu_G(A) +\varepsilon\mu_G(B). 
\end{align*}
\end{theorem}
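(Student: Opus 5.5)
The plan is to derive Theorem~\ref{thm: pop truncated} from Theorem~\ref{thm: popular Freiman} by comparing the truncated integral with popular sumsets at nearby thresholds. Write
\[
T_t(A,B)=\frac{1}{t}\int_G \min\{\one_A*\one_B(x),t\}\dd\mu_G(x)=\frac1t\int_0^t S_s(A,B)\dd s ,
\]
which follows from the layer cake formula. The function $s\mapsto S_s(A,B)$ is nonincreasing, so $T_t(A,B)\ge S_t(A,B)$. This inequality by itself is not useful in the right direction, since we want to bound some $S_{s}$ by $T_t$, and $T_t$ is only an average.

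\textbf{Step 1: pass from the average to a single threshold.} Fix a small $\delta>0$, chosen in terms of $\varepsilon$ and $\kappa$. Only the part of the integral over $s\in[\delta t,t]$ is used:
\[
T_t\ge \frac1t\int_{\delta t}^{t}S_s\dd s\ge (1-\delta)S_t .
\]
This only gives information at the top threshold $t$, which is again the wrong direction. The right approach is to use monotonicity the other way, $S_s\ge S_{t}$ for $s\le t$, together with a lower bound for $S_s$ coming from Tao's inequality. More directly, since $S_s\ge S_t$ for all $s\le t$, we have $S_t\le T_t$. Hence $S_t(A,B)\le T_t\le \mu_G(A)+(2-\kappa)\mu_G(B)$, and the hypothesis of Theorem~\ref{thm: popular Freiman} holds at the same $t\le t_0$ with the same $\kappa$.

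\textbf{Step 2: apply Theorem~\ref{thm: popular Freiman}.} It produces $\chi$, $I$, $J$ with the stated covering of $A$ and $B$ up to $\varepsilon\mu_G(B)$, and
\[
\mu_\T(I)\le S_t-\mu_G(B)+\varepsilon\mu_G(B)\le T_t-\mu_G(B)+\varepsilon\mu_G(B).
\]
The bound for $J$ follows in the same way. The constants $\eta_\tau$ and $t_0$ are taken to be the same as in Theorem~\ref{thm: popular Freiman}, which is what the phrase ``same parameters'' in the statement means.

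\textbf{Main obstacle.} The only real content is the inequality $S_t(A,B)\le T_t(A,B)$. The delicate point is to get the layer cake identity right, namely $\min\{f,t\}=\int_0^t\one_{\{f\ge s\}}\dd s$, and to check that the set $\{0<f<s\}$ is handled correctly. Here $\mathscr S_s$ is defined inside $A+B$, and for $s>0$ we have $\{f\ge s\}\subseteq A+B$ automatically, so the definition introduces no discrepancy. Monotonicity of $S_s$ in $s$ then finishes the argument. If a sharper form were needed, one could instead use the $\delta$-averaging above to reach a threshold $s\in[\delta t,t]$; this would require the conclusion of Theorem~\ref{thm: popular Freiman} to be uniform in $t\le t_0$, which it is.
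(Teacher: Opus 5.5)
Your argument is correct and is the paper's: everything reduces to $S_t(A,B)\le \frac1t\int_G\min\{\one_A*\one_B,t\}\dd\mu_G$, after which Theorem~\ref{thm: popular Freiman} applies at the same $t$ and its bounds on $\mu_\T(I)$, $\mu_\T(J)$ transfer directly. The paper gets that inequality in one line by integrating the pointwise bound $t\one_{\mathscr S_t(A,B)}\le\min\{\one_A*\one_B,t\}$, so your layer-cake derivation is an equivalent detour. The $\delta$-averaging and the appeal to Tao's inequality in Step~1 can simply be deleted; the bound $T_t\ge(1-\delta)S_t$ there is in fact in the right direction, not the wrong one.
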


This answers a question suggested by Tao~\cite{TaoKneser}. A weaker version of the result, that is when $2-\kappa$ being replaced by $1+\eta$,  can be derived from Christ--Iliopoulou.

\subsection*{Organization}
Section~\ref{sec:preliminaries} records basic results we will use from abstract harmonic analysis, including the quotient integral formula and properties of Haar measures. We also record the additive combinatorics input, that is Bilu's theorem. A refined projection theorem is proved in Section 3. In Section 4, we prove the Freiman $3k-4$ Theorem. In Section 5 we proved the sharp projection theorem for small doubling sets. Section 6 records applications to popular sumsets and inverse Tao's inequality.

\subsection*{Statement of AI}
The paper is written by the authors. ChatGPT is used for checking the errors when the first version is written, polishing the abstract, and help the authors to restructure the introduction. 

\subsection*{Acknowledgments} 
The authors thank Akshat Mudgal for discussions about inverse Tao's inequality, which motivated the authors to write Section 6. 
The first author is supported by NSF Grant DMS-2503063. 

\section{Preliminaries}\label{sec:preliminaries}

\subsection{Topological groups}
To make the paper more accessible to readers without a background in locally compact groups, we include here a brief introduction of the basic properties we used about the Haar measures. 
We say that a measure $\mu$ on a $\sigma$-algebra of Borel subsets of a locally compact abelian group $G$ is a \emph{Haar measure} on $G$ if the following conditions hold:
\begin{enumerate}
     \item (translation-invariance)  $\mu(X) =\mu(a+X)$ for all $a\in G$ and all measurable sets $X \subseteq G$.
    \item (inner and outer regular) When $X$ is open, $\mu(X) =\sup \mu(K)$ with $K$ ranging over compact subsets of $X$.  When $X$ is Borel,  $\mu(X) = \inf \mu(U)$ when $U$ ranging over open subsets of $G$ containing $X$.
    \item (compactly finite) $\mu$ takes finite measure on compact subsets of $G$.
    \item (measurability characterization) If there is an increasing sequence $(K_n)$ of compact subsets of $X$, and a decreasing sequence $(U_n)$ of open subsets of $G$ with $X \subseteq U_n$ for all $n$ such that $\lim_{n \to \infty} \mu(K_n) = \lim_{n \to \infty} \mu(U_n) $, then $X$ is measurable.
\end{enumerate}
When $G$ is compact, we use $\mu_G$ to denote the \emph{normalized Haar measure} on $G$, that is, $\mu_G(G)=1$. 

We use the following integral formula ~\cite[Theorem 2.49]{Folland} in our proofs.
\begin{fact}[Quotient integral formula]
Let $G$ be a locally compact group, and let $H$ be a closed normal subgroup of $G$. Let $\mu_G$ and $\mu_H$ be left Haar measures on $G$ and on $H$, respectively. Then  there is a unique left Haar measure $\mu_{G/H}$ on $G/H$, such that
for every $f\in C_c(G)$, 
\[
\int_G f(x)\dd \mu_G(x)=\int_{G/H}\int_H f(xh) \dd \mu_H(h)\dd \mu_{G/H}(xH).
\]
\end{fact}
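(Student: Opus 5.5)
The plan is the standard one: build the measure on $G/H$ from an averaging operator and obtain it from the Riesz representation theorem. For $f\in C_c(G)$ define
\[
Pf(xH)=\int_H f(xh)\dd\mu_H(h).
\]
This is well defined by left invariance of $\mu_H$. It is continuous and compactly supported on $G/H$, with support contained in the image of $\operatorname{supp} f$, using uniform continuity of $f$. It is also equivariant: $P(L_yf)=L_{yH}(Pf)$, where $L$ denotes left translation.

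The first key step is that $P\colon C_c(G)\to C_c(G/H)$ is surjective and sends nonnegative functions onto nonnegative functions. Let $\varphi\in C_c(G/H)$ have support $E$. Since the quotient map $q$ is open and $G$ is locally compact, there is a compact $K\subseteq G$ with $q(K)=E$. Choose $g\in C_c(G)$ with $g\geq 0$ and $g>0$ on $K$. Then $Pg>0$ on $E$, and I would set
\[
f=(\varphi\circ q)\,g/(Pg\circ q),
\]
so that $Pf=\varphi$. The same construction, applied with $\varphi$ equal to $1$ on a compact set, produces $\psi\in C_c(G)$ with $\psi\geq 0$ and $P\psi=1$ on any prescribed compact subset of $G/H$.

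The main obstacle is showing that $Pf=0$ implies $\int_G f\dd\mu_G=0$. Only then is
\[
\Lambda(\varphi):=\int_G f\dd\mu_G \quad\text{for any } f \text{ with } Pf=\varphi
\]
well defined. I would use $\psi$ with $P\psi=1$ on $q(\operatorname{supp} f)$ and compute
\[
\int_G f=\int_G f(x)\,P\psi(xH)\dd\mu_G(x)=\int_H\int_G f(x)\psi(xh)\dd\mu_G(x)\dd\mu_H(h).
\]
Substituting $x\mapsto xh^{-1}$ introduces the modular factor $\Delta_G(h^{-1})$. Using Fubini again and then $h\mapsto h^{-1}$ on $H$, which introduces $\Delta_H$, this becomes
\[
\int_G \psi(x)\int_H f(xh)\,\frac{\Delta_G(h)}{\Delta_H(h)}\dd\mu_H(h)\dd\mu_G(x).
\]
I expect the delicate point to be the identity $\Delta_G|_H=\Delta_H$ for closed normal $H$. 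I would prove it by observing that conjugation by $h\in H$ preserves $H$, and that $\mu_H$ transported by an automorphism of $H$ is scaled by a factor that matches the scaling of $\mu_G$; alternatively, since the paper only needs abelian groups, where both modular functions are trivial, I could just note that case. With the ratio equal to $1$, the inner integral is $Pf(xH)=0$, so $\int_G f=0$.

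The conclusion is then routine. By surjectivity, $\Lambda$ is a linear functional on $C_c(G/H)$. It is positive because a nonnegative $\varphi$ has a nonnegative preimage, and it is nonzero. Equivariance of $P$ together with left invariance of $\mu_G$ makes $\Lambda$ left invariant. By the Riesz representation theorem, $\Lambda$ is integration against a left Haar measure $\mu_{G/H}$, and the defining relation $\Lambda(Pf)=\int_G f$ is exactly the stated formula. For uniqueness, any measure satisfying the formula agrees with $\Lambda$ on every function of the form $Pf$. Surjectivity of $P$ says these are all of $C_c(G/H)$, so the two functionals coincide on $C_c(G/H)$ and the measures are equal.
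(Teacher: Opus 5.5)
The paper does not prove this Fact; it cites Folland's Theorem~2.49, and your outline follows that standard proof: the averaging map $P$, its surjectivity, the lemma that $Pf=0$ forces $\int_G f\dd\mu_G=0$, then Riesz. Every step is fine except your justification of $\Delta_G|_H=\Delta_H$, and the well-definedness of $\Lambda$ rests on it.

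Saying that the factor by which $c_h\colon x\mapsto hxh^{-1}$ scales $\mu_H$ matches the factor by which it scales $\mu_G$ just restates the claim. It is also false for general automorphisms of $G$ preserving $H$: the map $(x,y)\mapsto(x,2y)$ on $\R^2$ fixes $H=\R\times\{0\}$ pointwise but doubles Lebesgue measure on $\R^2$. The two factors agree for $h\in H$ because $c_h$ induces the identity on $G/H$. Turning that into an equality of moduli needs a measure on $G/H$ linking $\mu_G$ and $\mu_H$, which is the formula you are trying to prove. So as written your proof is complete only when the modular functions are trivial, for instance $G$ abelian. That is all the paper ever uses, but it is not the generality stated.

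The missing idea is to use normality up front. Since $G/H$ is a locally compact group, it already has a left Haar measure $\nu$. The functional $f\mapsto\int_{G/H}Pf\dd\nu$ on $C_c(G)$ has three properties:
\begin{itemize}
\item it is positive;
\item it is nonzero, since if $f\geq 0$ and $f(x_0)>0$ then $Pf>0$ on a neighbourhood of $x_0H$;
\item it is left invariant, by your equivariance $P(L_yf)=L_{yH}Pf$.
\end{itemize}
By uniqueness of Haar measure on $G$, it equals $c\int_G f\dd\mu_G$ for some $c>0$. Then $\mu_{G/H}=c^{-1}\nu$ satisfies the formula. Uniqueness follows either from uniqueness of Haar measure on $G/H$ up to scaling, or from your surjectivity argument. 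This route avoids the well-definedness lemma and modular functions altogether.

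If you prefer to keep your route, apply the same identity to $x\mapsto f(xh)$ with $h\in H$. This gives $\Delta_H(h^{-1})=\Delta_G(h^{-1})$, which is a correct proof of the identity you needed.
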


As a directly corollary, let $\pi: G\to Q$ be a homomorphism with compact kernel, then $\mu_{Q}(\pi(A))\geq \mu_G(A)$ for any compact $A\subseteq G$. 

We also repeatedly use the following connectedness principle. 
\begin{lemma}\label{lem: connected}
Suppose $\pi:G\to Q$ is a continuous surjective homomorphism of topological groups.  If $Q$ and $\ker \pi$ are connected, then $G$ is connected.
\end{lemma}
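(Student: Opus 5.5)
Argue by contradiction: suppose $G$ is not connected, and produce a proper nonempty clopen subset of $Q$. Let $K=\ker\pi$ and let $G_0$ be the identity component of $G$. Since $G_0$ is a closed normal subgroup, its cosets are exactly the connected components of $G$.

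First I will show that $K\subseteq G_0$. The kernel $K$ is connected by hypothesis and contains the identity $e$, so it lies in the component of $e$. Translating, each coset $gK=g\cdot K$ is connected, being the image of $K$ under the homeomorphism $x\mapsto gx$. Hence each coset $gK$ lies in the component of $g$, namely $gG_0$. So every coset of $K$ lies entirely inside a single component of $G$.

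Next, suppose $G=U\sqcup V$ with $U,V$ nonempty, disjoint and open (hence also closed). Every connected set, in particular every coset $gK$, lies entirely in $U$ or entirely in $V$. So $U$ and $V$ are unions of fibers of $\pi$, i.e.\ $U=\pi^{-1}(\pi(U))$ and $V=\pi^{-1}(\pi(V))$. Consequently $\pi(U)$ and $\pi(V)$ are disjoint, nonempty, and cover $Q$ by surjectivity.

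The key point is then that $\pi(U)$ and $\pi(V)$ are open. For this I use that a continuous surjective homomorphism of topological groups is an open map when viewed onto its image with the quotient topology. In the setting of this paper every group is compact (or locally compact $\sigma$-compact) Hausdorff, and by the open mapping theorem $\pi$ is open. Alternatively, for compact $G$ the map $\pi$ is closed, so $\pi(U)$ and $\pi(V)$ are closed, and being complementary they are also open. Either way this splits $Q$ into two disjoint nonempty open sets, contradicting the connectedness of $Q$.

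The only delicate step is the openness or closedness of $\pi$. In full generality one needs $\pi$ to be a quotient map; I will either state this as a standing assumption, which is automatic for the compact groups used throughout via the closed-map argument, or invoke the open mapping theorem for locally compact $\sigma$-compact groups.
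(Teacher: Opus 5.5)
Your argument is correct and is essentially the paper's proof: a clopen set is a union of the connected fibers $gK$, so $\pi(U)$ and $\pi(G\setminus U)$ are complementary and would disconnect $Q$. Your caveat is well founded, since the paper just asserts that $\pi$ is open and the lemma fails without some such hypothesis (the identity map from $\R$ with the discrete topology to $\R$ with its usual topology is a counterexample). Your closed-map argument for compact $G$ and Hausdorff $Q$ covers the applications in the paper, where the groups involved are compact.
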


\begin{proof}
If $U\subseteq G$ is clopen, then its intersection with every coset of $\ker \pi$ is clopen in that connected coset. Hence each fiber lies either in $U$ or in $G\setminus U$.  Since $\pi$ is open, $\pi(U)$ and $\pi(G\setminus U)$ are disjoint open subsets whose union is $Q$. Connectedness of $Q$ makes one of them empty.
\end{proof}

We should also note that connectedness of $G$ and $Q$ does not force $\ker\pi$ to be connected.

\subsection{Additive combinatorics}

We use the connected form of Kneser's inequality. 
\begin{fact}[Kneser's inequality]
Let $G$ be a compact connected abelian group, and $A,B\subseteq G$ are compact sets. Then
\[
\mu_G(A+B)\geq \min\{\mu_G(A)+\mu_G(B),1\}. 
\]
\end{fact}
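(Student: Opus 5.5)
The plan is to deduce the inequality from Kneser's structural theorem for compact abelian groups~\cite{Kneser}. Connectedness will force the only possible exceptional structure to be trivial. Concretely, I would invoke the following form of Kneser's theorem. If $A,B\subseteq G$ are compact (so $A+B$ is compact, hence measurable) and $\mu_G(A+B)<\mu_G(A)+\mu_G(B)$, then the stabilizer
\[
H=\{g\in G: g+A+B=A+B\}
\]
is an open (equivalently, finite index) closed subgroup of $G$, $A+B$ is a union of $H$-cosets, and
\[
\mu_G(A+B)\geq \mu_G(A+H)+\mu_G(B+H)-\mu_G(H).
\]
The order of steps is as follows. First, assume toward a contradiction that $\mu_G(A+B)<\min\{\mu_G(A)+\mu_G(B),1\}$. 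In particular $\mu_G(A+B)<\mu_G(A)+\mu_G(B)$ and $A+B\neq G$. Second, apply the structural theorem to obtain the open subgroup $H$. Third, use that in a connected group every open subgroup is also closed (its complement is a union of open cosets), so $H=G$. Fourth, since $A+B$ is a nonempty union of $H$-cosets, this gives $A+B=G$ and $\mu_G(A+B)=1$, a contradiction. The degenerate case where $A$ or $B$ is empty is trivial, so I would assume both are nonempty.

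The main obstacle is the structural input itself: showing that a failure of $\mu_G(A+B)\ge \mu_G(A)+\mu_G(B)$ produces a nontrivial open stabilizer. If I wanted a more self-contained route instead of citing Kneser, I would first reduce to finite dimensional tori. Using the Gleason--Yamabe / Peter--Weyl structure of compact connected abelian groups, choose continuous surjections $\pi_n:G\to\T^{d_n}$ whose kernels shrink to $\{0\}$. Compactness then gives $\bigcap_n \pi_n^{-1}\pi_n(A)=A$, so $\mu_{\T^{d_n}}(\pi_n A)\downarrow \mu_G(A)$ by the quotient integral formula. Also $\mu_{\T^{d_n}}(\pi_nA+\pi_nB)=\mu_G(\pi_n^{-1}\pi_n(A+B))\downarrow\mu_G(A+B)$. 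This reduces the claim to $\T^d$.

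On $\T^d$ I would then run a Dyson/$e$-transform argument directly. Alternatively, I would approximate by the finite groups $(\tfrac1N\Z/\Z)^d$ and apply finite Kneser, noting that the finite stabilizers must have index tending to $1$ or else be detected as proper closed subgroups in the limit, which connectedness of $\T^d$ forbids. Controlling this limiting stabilizer behaviour is where I expect the real difficulty.
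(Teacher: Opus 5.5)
Your argument is correct and matches the paper's treatment: the paper simply cites Kneser's theorem, and the standard way to obtain the connected form is yours, noting that the open stabilizer must equal $G$, so the self-contained torus route is not needed. One small correction: the empty case is not trivial but a genuine exception (if $A=\varnothing$ and $\mu_G(B)>0$, the left side is $0$), so nonemptiness of $A$ and $B$ has to be added as a hypothesis.
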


The inverse theorem input of the paper is Bilu's theorem~\cite{Bilu}, where he proved a Freiman's $3k-4$ theorem over tours. 
\begin{fact}[Bilu's theorem]\label{fact:toral}
For every $\tau\geq1$ there is a constant $\theta_\tau>0$, independent of $n$, with the following property.  If $A,B\subseteq\T^n$ are compact and
\[
 0<\tau^{-1}\mu_{\T^n}(A)
 \leq\mu_{\T^n}(B)
 \leq\mu_{\T^n}(A)
 \leq\theta_\tau,
\]
then either
\[
 \mu_{\T^n}(A+B)
 \geq\mu_{\T^n}(A)+2\mu_{\T^n}(B),
\]
or there exist a continuous surjective character $\psi:\T^n\to\T$ and compact arcs $I,J\subseteq\T$ such that
\begin{equation}\label{eq:toral-carrier}
 \begin{aligned}
 X&\subseteq\psi^{-1}(I),
 &\mu_\T(I)&=\mu_{\T^n}(X+Y)-\mu_{\T^n}(Y),\\
 Y&\subseteq\psi^{-1}(J),
 &\mu_\T(J)&=\mu_{\T^n}(X+Y)-\mu_{\T^n}(X).
 \end{aligned}
\end{equation}
\end{fact}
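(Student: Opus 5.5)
Fact~\ref{fact:toral} is quoted from Bilu~\cite{Bilu}, and in the rest of the paper it is used only as a black box. (In \eqref{eq:toral-carrier}, $X,Y$ should be read as $A,B$.) If I had to reconstruct a proof, I would discretize, apply a rectification-type $3k-4$ theorem in finite groups, and then pass to a limit.

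\emph{Step 1: inner regularity and discretization.}
By inner regularity of $\mu_{\T^n}$ and a standard smoothing argument, I would reduce to the case where $A$ and $B$ are finite unions of closed grid cubes of side $1/N$. The sumset is monotone, and the hypotheses have slack: the inequality $\mu(A+B)<\mu(A)+2\mu(B)$ is strict. So an approximation from outside by cubes changes all measures by an arbitrarily small amount. Then $A,B$ become sets $A_N,B_N\subseteq(\Z/N\Z)^n$ satisfying
\[
|A_N+B_N|<|A_N|+2|B_N|-\text{(negligible)}.
\]
They have density at most $\theta_\tau$, and their sizes are comparable up to the factor $\tau$.

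\emph{Step 2: rectification and the discrete asymmetric theorem.}
This is the core, and I expect it to be the main obstacle, because the constant $\theta_\tau$ must not depend on $n$. I would first show that a pair of small density and doubling below $3$ is Freiman $2$-isomorphic to a pair of integer sets via a single character $\psi:(\Z/N\Z)^n\to\Z/N\Z$. Concretely, the image of $A_N$ under $\psi$ should sit in an interval of length less than $N/2$. To find $\psi$, I would take a large Fourier coefficient of $1_{A_N}*1_{B_N}$, which exists by the small-doubling hypothesis. I would then run Bilu's dimension-reduction argument: use Freiman's lemma on dimension to show that the fibers of $\psi$ cannot carry two independent directions without forcing doubling $\geq3$. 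Once the pair is rectified, the asymmetric Freiman $3k-4$ theorem in $\Z$ (in Lev--Smeliansky or Grynkiewicz form) gives arithmetic progressions with common difference $1$ in the $\psi$-coordinate. Their lengths are at most $|A_N+B_N|-|B_N|+1$ and $|A_N+B_N|-|A_N|+1$, respectively.

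\emph{Step 3: back to the torus.}
The character $\psi$ lifts to a continuous surjective character $\T^n\to\T$, and the progressions become arcs $I,J$. After normalization, the $+1$ corrections have size $O(1/N)$. This gives the containment with $\mu_\T(I)\le\mu(A+B)-\mu(B)+o(1)$ and $\mu_\T(J)\le\mu(A+B)-\mu(A)+o(1)$.

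To remove the error terms, I would argue that for fixed $A$ only finitely many characters $\psi$ can occur. The reason is that $A$ has positive measure and lies in $\psi^{-1}(I)$ with $\mu_\T(I)<1/2$; together with the fine-scale approximation, this bounds the frequency of $\psi$. Passing to a subsequence on which $\psi$ is constant and the arcs converge, I would obtain the inequality versions of \eqref{eq:toral-carrier}. Equality is then obtained by noting that $\psi(A)+\psi(B)\subseteq I+J$ and applying Kneser's inequality on $\T$ to the projected sets, which forces the arc lengths to match the sumset bounds.
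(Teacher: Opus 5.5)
The paper gives no proof of this statement. It is quoted from Bilu~\cite{Bilu} and used as a black box, and you are right that $X,Y$ in \eqref{eq:toral-carrier} stand for $A,B$.

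As a reconstruction, however, your sketch breaks at its core, Step~2. A character $\psi:(\Z/N\Z)^n\to\Z/N\Z$ has fibers of size $N^{n-1}$. It therefore cannot be injective on a set of size about $\mu_{\T^n}(A)N^n$ once $n\ge2$.

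More intrinsically, the extremal sets of the theorem are strips $\psi^{-1}(I)$. Their discretizations are unions of full cosets of the kernel of the induced character on $(\Z/N\Z)^n$, which is a subgroup of order $N^{n-1}$. A finite subgroup $K\neq\{0\}$ has $|K+K|=|K|<2|K|-1$, so such sets are not Freiman isomorphic to sets of integers by \emph{any} map. Consequently:
\begin{itemize}
\item Lev--Smeliansky or Grynkiewicz cannot be applied to $(A_N,B_N)$.
\item Your length bound $|A_N+B_N|-|B_N|+1$ for a progression in $\Z/N\Z$ is off by the fiber size $N^{n-1}$. For $n\ge2$ it exceeds $N$ and says nothing.
\end{itemize}

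Applying the integer theorem to the projections $\psi(A_N),\psi(B_N)$ instead does not rescue the argument. Projection under a character can push the doubling above $3$; in Proposition~\ref{prop: sharp} it reaches $2K-2$. Even when the hypothesis survives, the resulting lengths are governed by $|\psi(A_N)+\psi(B_N)|-|\psi(B_N)|$, not by $\mu_{\T^n}(A+B)-\mu_{\T^n}(B)$.

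What is actually needed is a fiberwise ``$3k-4$ modulo a subgroup'' statement that produces the strip directly, with constants independent of $n$. That is the entire content of Bilu's theorem. Taking a large Fourier coefficient and then invoking Freiman's lemma on dimension does not supply it. A large coefficient only gives a density increment along one character, not containment in a short strip. Freiman's dimension lemma is a statement about torsion-free groups, so it presupposes the rectification you are trying to establish.

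Step~3 has two further problems, both repairable.

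First, the $+1$ corrections are $O(1/N)$ only once the lifted frequency vector $c\in\Z^n$ is bounded independently of $N$. A cube of side $1/N$ maps under $x\mapsto\langle c,x\rangle$ onto an arc of length $\|c\|_1/N$. So the frequency bound must come before this step, not after. It can be obtained, e.g., from Riemann--Lebesgue together with $\|\one_{A_N}-\one_A\|_1\to0$.

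Second, Kneser's inequality on $\T$ cannot force equality. It bounds $\mu_\T(\psi(A)+\psi(B))=\mu_\T(\psi(A+B))\ge\mu_{\T^n}(A+B)$ from below, which is the wrong direction. The shortest arcs really can be shorter than the stated lengths. For example, in $\T^2$ with $\psi(x,y)=x$, take $A=[0,a]\times\T$ and $B=[0,a]\times[0,3/4]$, with $a$ small and $\tau\ge 4/3$. Then $\mu_{\T^2}(A+B)-\mu_{\T^2}(B)=5a/4$, while $\psi(A)=[0,a]$. Equality in \eqref{eq:toral-carrier} is obtained trivially by enlarging the arcs.

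Finally, once some character with short arcs is known, removing the $\varepsilon$-losses is exactly what Proposition~\ref{prop: strong kneser} does in the paper. It uses the endpoint fiber count to give $\mu(A+B)\ge\max\{\mu(A)+h_B,\mu(B)+h_A\}$ with no limiting argument. But it presupposes the character, so it cannot replace Bilu's theorem.
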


This is known as $\alpha+2\beta$ inequality in the literature. For a sharper theorem specific to the circle, see Candela and de Roton~\cite{CandelaDeRoton}.

Finally, we include a classic example in $\T$, showing that $\mu_G(A+A)<3\mu_G(A)$ in Theorem~\ref{thm: main Freiman} cannot be improved  to $\mu_G(A+A)\leq 3\mu_G(A)$. 

\begin{example}\label{eq: 3}
Given $0<\delta<1/6$. Let $A= [0,\delta]\cup \{1/3\}$ be a compact subset of $\T$. Then $\mu_\T(A) = \delta$, and 
\[
A+A = [0,2\delta] \cup [1/3, 1/3+\delta]\cup \{2/3\}. 
\]
In particular, $\mu_\T(A+A) = 3\delta = 3\mu_\T(A)$. 

The smallest compact interval contains $A$ is $[0,1/3]$ with length $>2\delta$. In general, any surjective homomorphism mapping $\T$ to itself is $x\mapsto nx$ where $n$ is a positive integer. So when $n\geq 3$, the image of $[0,\delta]$ is $\min\{ n\delta,1\} > 2\delta = \mu_\T(A+A)-\mu_\T(A)$. It remains to verify $n=2$, and the shortest compact arc whose preimage contains $A$ is $[-1/3, 2\delta]$, has size  $1/3+2\delta$. 
\end{example}

\section{A strengthened Kneser's theorem}\label{sec:directional}

In this section we study the doubling of a set when its projection in $\T$ lies in  a short interval. We will prove the following strengthening of Kneser's inequality. The idea is similar to the sumset from subset result by the first author and Mudgal~\cite{JM} that if $A,B\subseteq\T$ and $B$ is contained in a small interval $I$, then $A+B$ can be approximated by $A$ translated by the two end points and a random point of $I$. Here we will show that in this situation one can also improve the lower bound on $\mu(A+B)$, and the proof is simpler as we only use two end points to estimate the sumset. 

\begin{proposition}\label{prop: strong kneser}
    Let $G$ be a connected compact abelian group and $\rho: G\to\T$ be a continuous surjective group homomorphism. 
Suppose $A,B\subseteq G$ are compact sets and the shortest intervals containing $\rho(A)$ and $\rho(B)$ have length $h_A$ and $h_B$ respectively. Suppose $h_A+h_B<1$. If $\mu_G(A)+\mu_G(B)>\max\{h_A,h_B\}$, we have
     \[
     \mu_G(A+B)\geq \max\{ \mu_G(A)+h_B, \mu_G(B)+h_A\}
     \]
\end{proposition}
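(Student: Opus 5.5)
By symmetry between $A$ and $B$ it suffices to prove $\mu_G(A+B)\geq \mu_G(A)+h_B$. The plan is to lift the $\rho$-coordinate to $\R$, slice $A$, $B$ and $A+B$ into fibres of $\rho$, and compare fibre measures level by level, using the two extreme points of $B$ in the $\rho$-direction.

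\emph{Setup.} By the corollary of the quotient integral formula, $\mu_G(B)\leq\mu_\T(\rho(B))\leq h_B$, so the claim is a genuine improvement of Kneser's inequality by the amount $h_B-\mu_G(B)\geq 0$. Since $h_A+h_B<1$, the minimal arcs $[a_0,a_0+h_A]\supseteq\rho(A)$ and $[b_0,b_0+h_B]\supseteq\rho(B)$ can be identified with intervals in $\R$, and $A+B$ projects into $[a_0+b_0,\,a_0+b_0+h_A+h_B]$, which has length $<1$ and so does not wrap around $\T$. By compactness and minimality the endpoints are attained: pick $b_1,b_2\in B$ with $\rho(b_1)=b_0$, $\rho(b_2)=b_0+h_B$, and similarly $a_1,a_2\in A$. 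For $t\in\R$ write $\nu(X_t)$ for the measure of the fibre of $X$ over $t$ (Haar measure on the corresponding coset of $K=\ker\rho$), so that $\mu_G(X)=\int\nu(X_t)\dd t$ by the quotient integral formula.

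\emph{Key step.} I would exhibit, inside $A+B$, disjoint pieces of total measure at least $\mu_G(A)+h_B$. The translates $A+b_1$ and $A+b_2$ lie in $A+B$, and $A+b_2$ is $A+b_1$ shifted upward by exactly $h_B$ in the $\rho$-direction. In the bottom window $[a_0+b_0,\,a_0+b_0+h_B)$, the set $A+B$ contains $A+b_1$. Above that window I want to use $A+b_2$, which sits over $[a_0+b_0+h_B,\,a_0+b_0+h_A+h_B]$. This gives
\[
\mu_G(A+B)\geq \int_{\text{bottom window}}\nu\big((A+b_1)_t\big)\dd t+\mu_G(A+b_2)=\mu_G(A)+\int_{a_0}^{a_0+h_B}\nu(A_s)\dd s .
\]
So it remains to gain an extra $h_B-\int_{a_0}^{a_0+h_B}\nu(A_s)\dd s$. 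I would get this from the fact that $B$ meets every level in $[b_0,b_0+h_B]$ that carries mass. Concretely, the bottom window also contains $a_1+B$, and more generally $A_{\leq s}+B$. I would then combine the portion of $A+B$ lying over levels below $\rho(a_2)+b_0$ with the two translates $a_1+B$ and $a_2+B$, and apply Kneser's inequality to suitable truncations. Specifically, I would apply it to $A\cap\rho^{-1}([a_0,s])$ plus $B$, together with a continuity or intermediate-value argument in $s$, to show that the $\rho$-projection of $A+B$ essentially fills a set of length $h_A+h_B$ with fibres that are large enough.

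\emph{Role of the hypothesis.} The assumption $\mu_G(A)+\mu_G(B)>\max\{h_A,h_B\}$ is what I expect to drive the fibre counting: it should force, via Kneser's inequality in $G$ applied to the truncated sets, that the truncated sumsets cannot be confined to a short range of levels. This makes the contributions of the upper part ($A+b_2$), the lower part ($A+b_1$) and the side parts ($a_i+B$) add up to at least $\mu_G(A)+h_B$.

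The main obstacle is this key step: converting the existence of the two endpoint translates into a measure gain of exactly $h_B-\mu_G(B)$ beyond Kneser, without double counting the overlaps between $A+b_1$ and $A+b_2$ in the middle range of levels. I would first try the truncation argument, that is, define $A^{(s)}=A\cap\rho^{-1}([a_0,a_0+s])$ and track $F(s)=\mu_G\big((A^{(s)}+B)\cup(A+b_2)\big)$ as $s$ increases, bounding its growth from below via Kneser. If that fails, I would fall back on a direct fibrewise Kneser in the cosets of the identity component of $\ker\rho$.
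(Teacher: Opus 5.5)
There is a genuine gap: the step that carries the whole content of the proposition is never supplied. Your two-translate estimate is correct, but it only gives $\mu_G(A)+\mu_G\bigl(A\cap\rho^{-1}[a_0,a_0+h_B)\bigr)$. For $h_A\le h_B$ this is just $2\mu_G(A)$, since $A+b_1$ and $A+b_2$ are then essentially disjoint. The remaining gain $h_B-\mu_G(A)$ must come from points of $A+B$ outside $(A+b_1)\cup(A+b_2)$, and nothing you propose produces it.

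Kneser's inequality, whether applied to $A$, to truncations $A^{(s)}$, or fibrewise, only involves measures of (pieces of) $B$. But $h_B$ can exceed $\mu_G(B)$ by an arbitrary amount carried by a null part of $B$, for instance a single point at height $h_B$. So the only access to $h_B$ is translation by $b_2-b_1$. Moreover, $\ker\rho$ need not be connected: for the universal solenoid no nontrivial character has connected kernel, and Theorem~\ref{thm: main Freiman} needs exactly this generality. A fibrewise Kneser argument in cosets of $\ker\rho$ or of its identity component therefore has no evident mechanism. The unspecified intermediate-value argument in $s$ does not address any of this.

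The missing idea is to use the translation by $b_2-b_1$ along \emph{entire orbits}, and then to use the hypothesis as a pigeonhole covering statement rather than through Kneser.

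\textbf{Setup.} Normalise so that $b_1=0$, $\rho(A)\subseteq[0,h_A]$ and $\rho(B)\subseteq[0,h_B]$. Lift $A,B$ into the pullback $\widetilde G=\{(g,t)\in G\times\R:\rho(g)\equiv t \bmod \Z\}$ as sets $\widetilde A,\widetilde B$ lying over these intervals. Because $h_A+h_B<1$, this preserves $\mu_G(A)$, $\mu_G(B)$ and $\mu_G(A+B)$. Let $b\in\widetilde B$ lie over $h_B$, and pass to the compact quotient $q:\widetilde G\to L=\widetilde G/b\Z$. Its induced measure $\nu$ satisfies $\nu(L)=h_B$.

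\textbf{Orbit count.} On each orbit $q^{-1}(h)$, the finite set $F=\widetilde A\cap q^{-1}(h)$ satisfies $F\cup(F+b)\subseteq\widetilde A+\widetilde B$, and $|F\cup(F+b)|\ge|F|+1$ when $F\neq\varnothing$. Hence
\[
|(\widetilde A+\widetilde B)\cap q^{-1}(h)|\ \ge\ |\widetilde A\cap q^{-1}(h)|+\one_{q(\widetilde A)+q(\widetilde B)}(h).
\]
Integrating over $L$ gives $\mu_G(A+B)\ge\mu_G(A)+\nu\bigl(q(\widetilde A)+q(\widetilde B)\bigr)$.

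\textbf{Covering.} The map $q$ is injective on $\widetilde B$ off a null set, and loses at most $(h_A-h_B)_+$ of measure on $\widetilde A$. Therefore $\nu(q(\widetilde A))+\nu(q(\widetilde B))\ge\mu_G(A)+\mu_G(B)-(h_A-h_B)_+>h_B=\nu(L)$. By pigeonhole in $L$ this forces $q(\widetilde A)+q(\widetilde B)=L$, and the bound becomes $\mu_G(A)+h_B$.

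This last step is exactly where $\mu_G(A)+\mu_G(B)>\max\{h_A,h_B\}$ enters, and it needs neither Kneser's inequality nor connectedness of $\ker\rho$. Your two-translate bound is the weak shadow of this count. It only credits orbits that meet $\widetilde A$ in the bottom window, and it ignores the orbits missing $\widetilde A$, which are precisely where the covering argument supplies the extra $h_B-\nu(q(\widetilde A))$.
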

\begin{proof}
    Let $I_A,I_B\subseteq \T$ be the shortest intervals containing $\rho(A)$ and $\rho(B)$, and $\mu_\T(I_A)=h_A$, $\mu_\T(I_B)=h_B$. By compactness we may assume $\rho(A)$ and $\rho(B)$ contain the boundary points of $I_A$ and $I_B$. By quotient integral formula, we have $\mu_G(A)\leq h_A$ and $\mu_G(B)\leq h_B$. Without loss of generality we also assume that $0\in A\cap B$, and $I_A=[0,h_A]$, $I_B=[0,h_B]$. Let $H=\ker\rho$. Note that $H$ may not be connected. By symmetry, we further assume $h_A\leq h_B$. 

    We now consider the pullback of the universal covering $\R\to\T$, and define
    \[
    \widetilde{G} = \{(g,t)\in G\times \R : \rho(g) = t \pmod{\mathbb Z}\}. 
    \]
    We use $p_1:\widetilde{G} \to G$ and $p_2:\widetilde{G}\to\R$ to denote the coordinate projections. Then $\ker(p_1)\cong \Z$ and $\ker (p_2)\cong H$. Define the lifts of $A$ and $B$ by identifying $I_A$ and $I_B$ intervals in $\R$
    \[
    \widetilde{A} = p_1^{-1}(A) \cap p_2^{-1}(I_A) \quad\text{and}\quad \widetilde{B} = p_1^{-1}(B) \cap p_2^{-1}(I_B). 
    \]
    Choose a Haar measure $\mu_{\widetilde{G}}$ on $\widetilde{G}$ via the quotient integral formula along the exact sequence
    \[
    0\to H \to \widetilde{G} \xrightarrow{p_2} \R \to 0
    \]
    by choosing the normalized measure on $H$ and Lebesgue measure on $\R$. Therefore $\mu_{\widetilde{G}}(\widetilde{A})=\mu_G(A)$ and $\mu_{\widetilde{G}}(\widetilde{B})=\mu_G(B)$. Also, since $p_2(\widetilde{A}+\widetilde{B})\subseteq [0,h_A+h_B]$ and $h_A+h_B<1$, the map $p_1$ is injective on $\widetilde{A}+\widetilde{B}$, which implies
    \[
    \mu_{\widetilde{G}}(\widetilde{A}+\widetilde{B}) = \mu_G(A+B). 
    \]

    Let $b\in \widetilde{B}$ such that $b\in p_2^{-1}(h_B)$. Now we quotient out the discrete subgroup $q:\widetilde{G}\to L=\widetilde{G}/b\mathbb Z$. Clearly, $L$ is compact. Let $\nu$ be the Haar measure on $L$ given by the quotient formula, where we use the normalzed $\mu_{\widetilde{G}}$ on $\widetilde{G}$ and the counting measure on the kernel, that is, for any compact set $\Omega\subseteq \widetilde{G}$
   \begin{equation}\label{eq: qif for L}
\mu_{\widetilde{G}}(\Omega) = \int_{L} |\Omega \cap q^{-1}(h)| \dd \nu(h). 
   \   \end{equation}
Note that $|\Omega \cap q^{-1}(h)|<\infty$ when $D$ is compact. We also have $\nu(L)=h_B$ from the construction. 
 
 On the other hand $q$ and $p_2$ induce a short exact sequence
    \begin{equation}\label{eq: L}
    0\to H\to L\xrightarrow{p_2'} \R/h_B\Z\to0. 
    \end{equation}
 The induce map $p_2'$ pushes $\nu$ on $L$ to a space of Lebesgue measure $h_B$, and hence $\nu(p_2'^{-1}(0))=0$. As the map $q$ is injective on $\widetilde{B}$ except for the end points and end points lie in a $\nu$-null set wo conclude that $\nu(q(\widetilde{B}))=\mu_G(B)$. 

 Observe that the lattice counting function satisfies the pointwise estimate 
 \[
|(\widetilde{A} + \widetilde{B}) \cap q^{-1}(h)| \geq |\widetilde{A}\cap q^{-1}(h)| + \one_{q(\widetilde{A})+q(\widetilde{B})}(h). 
 \]
Indeed, the above estimate is equivalent to $\widetilde{A}+\widetilde{B}\supseteq \widetilde{A}\cup (\widetilde{A}+b)$. On a fiber meeting $\widetilde{A}$, the union of a nonempty finite subset of a translate of $b\Z$ with its translate by $b$ has at least one more point. Using the quotient integral formula \eqref{eq: qif for L},
\begin{equation}\label{eq: bound assymetric}
\mu_G(A+B)=\mu_{\widetilde{G}}(\widetilde{A}+\widetilde{B})\geq \mu_{\widetilde{G}}(\widetilde{A}) + \nu(q(\widetilde{A})+q(\widetilde{B}))=\mu_G(A) + \nu(q(\widetilde{A})+q(\widetilde{B})). 
\end{equation}

As $h_A\leq h_B$, we have $\nu(q(\widetilde{A}))=\mu_G(A)$, and hence
\[
\nu(q(\widetilde{A})) + \nu(q(\widetilde{B}))\geq \mu_G(A) + \mu_G(B) \geq h_B. 
\]
This implies that $q(\widetilde{A})+q(\widetilde{B}) = L$. Therefore by \eqref{eq: bound assymetric} we have
\[
\mu_G(A+B) \geq \mu_G(A) + h_B. 
\]
We also conclude $\mu_G(A+B)\geq \mu_G(B) + h_A$ by switching the role of $A$ and $B$. The only thing to note is that, as $h_A\leq h_B$ we no longer have, after switching the role, $\nu(q(\widetilde{B}))=\mu_G(B)$, and instead, $\nu(q(\widetilde{B}))=\mu_G(B) - (h_B-h_A)$. But this still implies $q(\widetilde{A})+q(\widetilde{B}) = L$ and the conclusion follows. 
\end{proof}

We remark that a large part of the argument is arguing the measure between maps, as we do not have $\ker(\rho)$ is connected. With the connectedness assumption the argument can be shortened. Proposition~\ref{prop: strong kneser} implies the following version of $3k-4$ type result.

\begin{corollary}\label{cor: restricted 3k-4}
Let $G$ be a connected compact abelian group and $\rho: G\to\T$ be a continuous surjective group homomorphism with connected kernel. 
Suppose $A\subseteq G$ is a compact set and the shortest interval containing $\rho(A)$ has length $h_A$. Suppose $h_A<1/2$. Then
     \[
     \mu_G(A+A)\geq \min\{ \mu_G(A)+h_A, 3\mu_G(A)\}. 
     \]    
\end{corollary}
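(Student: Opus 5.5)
Apply Proposition~\ref{prop: strong kneser} with $B=A$, so that $h_B=h_A$ and the hypothesis $h_A+h_B<1$ becomes $h_A<1/2$, which is assumed. There are two cases.

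If $2\mu_G(A)>h_A$, the proposition gives $\mu_G(A+A)\geq \mu_G(A)+h_A$ at once, so the minimum is attained and we are done.

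If $2\mu_G(A)\le h_A$, the goal is $\mu_G(A+A)\geq 3\mu_G(A)$. Here the proposition does not apply, and this is where connectedness of $H=\ker\rho$ is used. The plan is to redo the lifting from the proof of Proposition~\ref{prop: strong kneser}, translating so that $I_A=[0,h_A]$.
\begin{itemize}
\item Lift $A$ to $\widetilde A\subseteq\widetilde G$ with $p_2(\widetilde A)\subseteq[0,h_A]$. Since $2h_A<1$, we get $\mu_{\widetilde G}(\widetilde A+\widetilde A)=\mu_G(A+A)$, and $\widetilde G\cong$ an extension of $\R$ by the compact connected group $H$.
\item Split at the midpoint: set $A_1=\widetilde A\cap p_2^{-1}[0,h_A/2]$ and $A_2=\widetilde A\cap p_2^{-1}[h_A/2,h_A]$.
\item Then $A_1+A_1$, $A_1+A_2$ and $A_2+A_2$ project into $[0,h_A]$, $[h_A/2,3h_A/2]$ and $[h_A,2h_A]$ respectively. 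These overlap only at endpoints, whose fibres are null.
\end{itemize}
Two Kneser-type facts in $\widetilde G$ are needed: $\mu(X+X)\ge 2\mu(X)$ and $\mu(X+Y)\ge\mu(X)+\mu(Y)$. These follow from the one-dimensional Brunn--Minkowski argument on $\R$ combined with Kneser on the connected fibres $H$: an $\R$-extension of a compact connected group satisfies $\mu(X+Y)\ge\mu(X)+\mu(Y)$ for compact sets, via the usual extreme-fibre argument or an induction on the number of fibres. Summing the three pieces and using that they are disjoint up to null sets gives
\[
\mu_G(A+A)\ge 2\mu(A_1)+2\mu(A_2) \quad\text{and}\quad \mu_G(A+A)\ge \mu(A_1+A_1)+\mu(A_1+A_2),
\]
and similar partial sums.

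The main obstacle is extracting exactly the constant $3$; the split at the midpoint is not canonical. The cleaner route is to use the extreme fibres directly. Let $a,a'\in\widetilde A$ lie over the endpoints $0$ and $h_A$. Then
\[
\widetilde A+\widetilde A\supseteq (a+\widetilde A)\cup(a'+\widetilde A).
\]
The translate $a+\widetilde A$ lies over $[0,h_A]$ and $a'+\widetilde A$ lies over $[h_A,2h_A]$, so they are essentially disjoint. This only gives $2\mu(A)$, so the extra $\mu(A)$ must come from the fibre structure. Write $f(t)=\mu_H(\widetilde A\cap p_2^{-1}(t))$. Kneser in $H$ (connected) gives that the fibre of $\widetilde A+\widetilde A$ over $s$ has measure at least $\sup_{t+t'=s}\min\{f(t)+f(t'),1\}$. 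This reduces the problem to a one-dimensional statement: the ``sup-convolution'' of $f$ with itself, with $f\le 1$, has integral at least $\min\{3\int f,\ \int f+h_A\}$. This should follow from the Riesz--Sobolev / Brunn--Minkowski-type inequality for $\min(f+g,1)$ on $\R$, applied to level sets $\{f>\lambda\}$ using the one-dimensional $3k-4$ (Freiman) bound on $\R$: for $X\subseteq\R$ of diameter $h$, $|X+X|\ge\min\{3|X|,|X|+h\}$. Integrating the level-set version over $\lambda$ is the step expected to require the most care.
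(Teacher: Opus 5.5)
\textbf{There is a gap: the second case, $2\mu_G(A)\leq h_A$, which is the real content of the corollary, is never proved.} Your first case is fine: with $B=A$ and $2\mu_G(A)>h_A$, Proposition~\ref{prop: strong kneser} gives $\mu_G(A+A)\geq\mu_G(A)+h_A$ directly.

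The problems in the second case are these.
\begin{itemize}
\item The midpoint split contains a false step. $A_1+A_1$ lies over $[0,h_A]$ and $A_1+A_2$ lies over $[h_A/2,3h_A/2]$. These overlap in an interval of length $h_A/2$, not just at an endpoint. So $\mu_G(A+A)\geq\mu(A_1+A_1)+\mu(A_1+A_2)$ does not follow.
\item The fibre reduction leaves a functional $3k-4$ inequality for $F(s)=\sup_{t+t'=s}\min\{f(t)+f(t'),1\}$. You only say it ``should follow'' from level sets, and that is not routine. For instance, take $f=\one_{[0,\delta]}$ on the support $[0,\delta]\cup\{h_A\}$ with $h_A\geq 2\delta$. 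The natural inclusion $\{F>\lambda\}\supseteq\{f>\lambda/2\}+\{f>\lambda/2\}$ then gives only $2\delta$, while $3\delta$ is needed.
\end{itemize}
So the constant $3$ is never actually produced.

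The idea you are missing is already in the proof of Proposition~\ref{prop: strong kneser}.
\begin{itemize}
\item The bound \eqref{eq: bound assymetric}, $\mu_G(A+A)\geq\mu_G(A)+\nu\bigl(q(\widetilde A)+q(\widetilde A)\bigr)$, needs no relation between $\mu_G(A)$ and $h_A$. The density hypothesis was used only to force $q(\widetilde A)+q(\widetilde A)=L$.
\item $L=\widetilde G/b\Z$ is an extension of the circle $\R/h_A\Z$ by $H$. When $H$ is connected, $L$ is compact and connected by Lemma~\ref{lem: connected}. It has $\nu(L)=h_A$ and $\nu(q(\widetilde A))=\mu_G(A)$.
\item Kneser's inequality in $L$ gives $\nu\bigl(q(\widetilde A)+q(\widetilde A)\bigr)\geq\min\{2\mu_G(A),h_A\}$. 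Hence $\mu_G(A+A)\geq\mu_G(A)+\min\{2\mu_G(A),h_A\}$, which covers both of your cases at once.
\end{itemize}

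Your extreme-fibre inclusion $\widetilde A+\widetilde A\supseteq(a+\widetilde A)\cup(a'+\widetilde A)$ is exactly the right starting point. Don't stop at noting that the two translates are essentially disjoint, which only gives $2\mu_G(A)$. Instead, quotient by the lattice generated by $a'-a$ and count points on its cosets. On every coset that meets $\widetilde A+\widetilde A$, the sumset has at least one more point than $\widetilde A$. The set of such cosets is the sumset $q(\widetilde A)+q(\widetilde A)$ in the wrapped connected group $L$, and Kneser there supplies the missing $\mu_G(A)$ (or $h_A-\mu_G(A)$).
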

\begin{proof}
Note that by Lemma~\ref{lem: connected}, connectedness on $\ker \rho$ forces $L$ in \eqref{eq: L} to be connected. 
     Apply Kneser's theorem to $L$ in \eqref{eq: bound assymetric}, we have
    \[
    \mu_G(A+A)\geq \mu_G(A) + \min\{2\mu_G(A), h_A\}
    \]
    as desired. 
\end{proof}

For the readers who cares about the projection theorem, Proposition~\ref{prop: strong kneser} implies the following result. 

\begin{corollary}
    Let $0\to H\to G\xrightarrow{\pi} Q\to 0$ be an exact sequence of connected compact abelian groups, and $A\subseteq G$ be compact. We further assume there is $\rho: Q\to\T$ a continuous surjectuve group homomorphism with connected kernal such that the shortest interval containing $\rho(\pi(A))$ is at most $1/2$. If $\mu_G(A+A)<3\mu_G(A)$, then
    \[
    \mu_{Q}(\pi(A)+\pi(A)) \leq 2(\mu_G(A+A)-\mu_G(A)). 
    \]
\end{corollary}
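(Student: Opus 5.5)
The plan is to reduce to Corollary~\ref{cor: restricted 3k-4}, applied to the composite character $\rho' = \rho\circ\pi : G\to\T$, and then push the resulting interval bound down to $Q$.

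\emph{Step 1: the composite character.} The map $\rho'$ is a continuous surjective homomorphism. Its kernel is $\pi^{-1}(\ker\rho)$, which fits into the exact sequence
\[
0\to H\to \pi^{-1}(\ker\rho)\xrightarrow{\pi}\ker\rho\to 0 .
\]
Here $H$ and $\ker\rho$ are both connected, so Lemma~\ref{lem: connected} shows that $\ker\rho'$ is connected. We have $\rho'(A)=\rho(\pi(A))$, so the shortest interval $I$ containing $\rho'(A)$ is the same as the one containing $\rho(\pi(A))$. Write $h=\mu_\T(I)\le 1/2$.

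\emph{Step 2: bound $h$.} Suppose first that $h<1/2$. Corollary~\ref{cor: restricted 3k-4} applied to $A$ and $\rho'$ gives
\[
\mu_G(A+A)\ge\min\{\mu_G(A)+h,\,3\mu_G(A)\}.
\]
The hypothesis $\mu_G(A+A)<3\mu_G(A)$ rules out the second term, so $h\le \mu_G(A+A)-\mu_G(A)$.

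\emph{Step 3: push down to $Q$.} We have $\pi(A)+\pi(A)\subseteq \rho^{-1}(I+I)$, and $I+I$ is an arc of length $\min\{2h,1\}=2h$. The pushforward of $\mu_Q$ under the surjective homomorphism $\rho$ is $\mu_\T$, by the quotient integral formula. Hence
\[
\mu_Q(\pi(A)+\pi(A))\le \mu_\T(I+I)=2h\le 2\bigl(\mu_G(A+A)-\mu_G(A)\bigr).
\]

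\emph{Step 4: the boundary case $h=1/2$.} This is the main obstacle, since Proposition~\ref{prop: strong kneser} and Corollary~\ref{cor: restricted 3k-4} were stated under the strict condition $h_A+h_B<1$. My plan is to check that their proofs survive when $h_A+h_B=1$. In that case $p_2(\widetilde A+\widetilde B)\subseteq[0,1]$, so $p_1$ is injective on $\widetilde A+\widetilde B$ away from the fibre $p_2^{-1}(\{0,1\})$. That fibre has measure zero, so $\mu_{\widetilde G}(\widetilde A+\widetilde B)=\mu_G(A+B)$ still holds. The rest of the argument (the lattice count in $L$ and Kneser's theorem in the connected group $L$) is unchanged. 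This gives $1/2=h\le\mu_G(A+A)-\mu_G(A)$, and then Step 3 yields $\mu_Q(\pi(A)+\pi(A))\le 1\le 2(\mu_G(A+A)-\mu_G(A))$, which finishes the proof.
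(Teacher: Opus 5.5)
Your proof is correct and is essentially the paper's argument: pass to $\rho\circ\pi$, whose kernel is connected as an extension of $\ker\rho$ by $H$, apply Corollary~\ref{cor: restricted 3k-4} to get $h\le\mu_G(A+A)-\mu_G(A)$, and bound $\mu_Q(\pi(A)+\pi(A))$ by $\mu_\T(I+I)=2h$. Your Step~4 also handles the endpoint $h=1/2$, which the paper's proof skips even though Corollary~\ref{cor: restricted 3k-4} is stated only for $h_A<1/2$; your fix is valid, because $p_1$ stays injective on $\widetilde A+\widetilde A$ away from the null fibres $p_2^{-1}(\{0,1\})$ and the rest of the argument goes through unchanged.
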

\begin{proof}
The kernel of $\rho\circ\pi$ is an extension of $\ker\rho$ by $H$, and is connected.  By Corollary~\ref{cor: restricted 3k-4}, write $h_A$ the length of the shortest compact interval $I_A$ containing $\rho(\pi(A))$, we have $h_A\leq \mu_G(A+A)-\mu_G(A)$. Observe that $\pi(A)+\pi(A)\subseteq I_A+I_A $, hence
\[
\mu_Q(\pi(A)+\pi(A)) \leq 2h_A \leq 2(\mu_G(A+A)-\mu_G(A))
\]
as claimed. 
\end{proof}

\section{Proof of Theorem~\ref{thm: main Freiman}}

We will prove the Freiman theorem following projecting and lifting argument in \cite[Section 11]{JingTranKIP}. We will use the compact connected abelian case of the Hilbert 5th problem by Pontrjagin~\cite{Pontrjagin}. The general statement which also works for nonabelian groups is known as the Gleason--Yamabe theorem~\cite{Gleason,Yamabe}. 
\begin{fact}\label{fact: abelian GY}
    Let $G$ be a compact connected abelian group. For every neighbourhood $U$ of $0$, there is a compact subgroup $H\subseteq U$ such that $G/H$ is a finite dimensional torus. 
\end{fact}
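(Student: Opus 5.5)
We would prove this via Pontryagin duality. The two standard inputs are that characters of a compact abelian group separate points, and that a compact abelian group $G$ is connected if and only if its discrete dual group $\widehat G$ is torsion-free.

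\textbf{Step 1: choose finitely many characters.} Let $U$ be an open neighbourhood of $0$. The set $G\setminus U$ is closed in $G$, hence compact. For each $x\in G\setminus U$ we have $x\neq 0$, so some continuous character $\chi_x\in\widehat G$ satisfies $\chi_x(x)\neq 0$ in $\T$. The sets $V_\chi=\{y\in G:\chi(y)\neq 0\}$ are open and cover $G\setminus U$. By compactness there are finitely many characters $\chi_1,\dots,\chi_m$ with $G\setminus U\subseteq V_{\chi_1}\cup\dots\cup V_{\chi_m}$.

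\textbf{Step 2: define $H$.} Put $H=\bigcap_{i=1}^m\ker\chi_i$. This is a closed subgroup, so it is compact. If $x\notin U$, then $\chi_i(x)\neq 0$ for some $i$, so $x\notin H$. Hence $H\subseteq U$.

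\textbf{Step 3: identify $G/H$.} Pontryagin duality identifies the dual of $G/H$ with the annihilator $H^\perp\subseteq\widehat G$. The main point to justify is the equality
\[
H^\perp=\langle\chi_1,\dots,\chi_m\rangle=:\Lambda.
\]
The inclusion $\Lambda\subseteq H^\perp$ is clear. For the reverse, use the duality between closed subgroups of $G$ and subgroups of the discrete group $\widehat G$: every subgroup $\Lambda$ of $\widehat G$ satisfies $(\Lambda^\perp)^\perp=\Lambda$. Here $\Lambda^\perp=H$ by definition of $H$, so $H^\perp=\Lambda$. Equivalently, a character of $G$ trivial on $H$ factors through $G/H$. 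The map $x\mapsto(\chi_1(x),\dots,\chi_m(x))$ embeds $G/H$ as a closed subgroup of $\T^m$, and one checks that characters of this subgroup extend to $\T^m$, so they are integer combinations of the $\chi_i$.

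\textbf{Step 4: conclude.} Since $G$ is connected, $\widehat G$ is torsion-free. So $\Lambda$ is a finitely generated torsion-free abelian group, hence $\Lambda\cong\Z^d$ for some finite $d$. Therefore $G/H\cong\widehat{\Z^d}\cong\T^d$, a finite-dimensional torus. Alternatively, $G/H$ is a continuous image of a connected group, hence connected, and it is a closed subgroup of $\T^m$; the closed connected subgroups of $\T^m$ are exactly the tori.

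The only real obstacle is the duality statement in Step 3. It is standard, following from Pontryagin duality applied to the quotient $G/H$.
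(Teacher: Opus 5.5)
Your proof is correct, and it takes a different route from the paper only in the sense that the paper gives no proof. The paper quotes the statement as the compact connected abelian case of Hilbert's fifth problem, attributed to Pontrjagin, and points to the Gleason--Yamabe theorem for the general version.

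Your argument is the standard self-contained derivation from Pontryagin duality:
\begin{itemize}
\item compactness of $G\setminus U$, together with point separation by characters, gives finitely many $\chi_1,\dots,\chi_m$;
\item $H=\bigcap_i\ker\chi_i$ then lies in $U$;
\item $G/H$ is identified either with the dual of the finitely generated torsion-free group $\Lambda=\langle\chi_1,\dots,\chi_m\rangle\cong\Z^d$, or with a closed connected subgroup of $\T^m$.
\end{itemize}
(The statement only says ``neighbourhood''; replacing $U$ by its interior, as you implicitly do, is harmless.)

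What your route buys is transparency. Its only deep inputs are the duality facts you name: characters separate points, annihilator duality, and the equivalence ``$G$ connected iff $\widehat G$ torsion-free''. It also works verbatim for non-metrizable $G$, which the paper explicitly needs. Finally, it makes the paper's remark that $H$ need not be connected precise. Since $\widehat H\cong\widehat G/\Lambda$, the kernel $H$ is connected exactly when $\Lambda$ is a pure subgroup of $\widehat G$. For the solenoid $\widehat{\mathbb Q}$ this essentially never happens, matching the remark after the proof of Theorem~\ref{thm: main Freiman}.

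The Gleason--Yamabe route is much heavier, relying on the no-small-subgroups machinery. It yields a Lie quotient, so in the compact connected abelian setting one would still need your Step~4 alternative, that a compact connected abelian Lie group is a torus. Its advantage is generality (nonabelian, locally compact groups), which the paper does not use.
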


Note that the kernel $H$ needs not to be connected.

\begin{proof}[Proof of Theorem~\ref{thm: main Freiman}]
Let $\theta_{2\tau}$ be the constant in Fact~\ref{fact:toral} for  parameter $2\tau$, and let $\eta = \min\{\theta_{2\tau}/2,1/4\}$. 
  Choose
\begin{equation*}
 0<\varepsilon<\min\{\mu_G(A),\,\mu_G(A)+2\mu_G(B)-\mu_G(A+B)\}.
\end{equation*}

For each of the sets $A,B,A+B$, the outer regularity of Haar measure guarantees open sets $U_A$, $U_B$, $U_{A+B}$ that approximate them from outside with measure difference at most $\varepsilon$.
By compactness of these three sets, there is a common neighbourhood $U$ of $0$ such that $X+U\subseteq U_X$ for all three choices of $X=A,B,A+B$. Now we apply the structure theorem of compact abelian groups (Fact~\ref{fact: abelian GY}) and obtain $H\subseteq U$ with $G/H \cong \T^d$ for some $d<\infty$. Let $\pi: G\to G/H$ be the natural projection. Therefore, for $X=A,B,A+B$, using quotient integral formula one has
\[
\mu_G(X)\leq \mu_{\T^d}(\pi(X)) = \mu_G(X+K)\leq \mu_G(U_X)<\mu_G(X)+\varepsilon. 
\]

We next verify the image satisfies the small doubling condition. This is straightforward, as 
\begin{align*}
\mu_{\T^d}(\pi(A)+\pi(B)) &< \mu_G(A+B)+\varepsilon <\mu_G(A)+2\mu_G(B)\\ 
&\leq \mu_{\T^d}(\pi(A))+\mu_{\T^d}(\pi(B))+\min\{\mu_{\T^d}(\pi(A)),\mu_{\T^d}(\pi(B))\}. 
\end{align*}
Also the ratio of $\mu_{\T^d}(\pi(A))$ and $\mu_{\T^d}(\pi(B))$ is controlled, as
\[
\max\{\mu_{\T^d}(\pi(A)),\mu_{\T^d}(\pi(B))\} <\mu_G(A)+\varepsilon <2\mu_G(A)\leq \theta_{2\tau},
\]
and
\[
\min\{\mu_{\T^d}(\pi(A)),\mu_{\T^d}(\pi(B))\} \geq \mu_G(B)\geq \frac{\mu_G(A)}{\tau}>\frac{\max\{\mu_{\T^d}(\pi(A)),\mu_{\T^d}(\pi(B))\}}{2\tau}.
\]
We now apply Bilu's theorem (Fact~\ref{fact:toral}), so there is a continuous surjective character $\chi:\T^d\to\T$, and two compact intervals $I_A,I_B\subseteq \T$ with $\pi(A)\subseteq\chi^{-1}(I_A)$, $\pi(A)\subseteq\chi^{-1}(I_B)$, and
\[
\mu_\T(I_A)\leq \mu_{\T^d}(\pi(A)+\pi(B))-\mu_{\T^d}(\pi(B)),\quad \mu_\T(I_B)\leq \mu_{\T^d}(\pi(A)+\pi(B))-\mu_{\T^d}(\pi(A)). 
\]

Define the continuous surjective character $\psi=\chi\circ \pi: G\to\T$. Then clearly $A\subseteq \psi^{-1}(I_A)$ and $B\subseteq \psi^{-1}(I_B)$. Moreover, 
\[
\mu_\T(I_A)\leq \mu_{G}(A+B)-\mu_{G}(B)+\varepsilon<\mu_G(A)+\mu_G(B), 
\]
and similarly 
\[
\mu_\T(I_B)\leq \mu_{G}(A+B)-\mu_{G}(A)+\varepsilon<2\mu_G(B)\leq \mu_G(A)+\mu_G(B). 
\]
Suppose $I,J\subseteq\T$ are the shortest compact intervals containing $\psi(A)$ and $\psi(B)$ respectively, then $I\subseteq I_A$ and $J\subseteq I_B$, and with $h_A=\mu_\T(I)$ and $h_B=\mu_\T(J)$, 
\[
\max\{h_A,h_B\}<\mu_G(A)+\mu_G(B),\quad\text{and}\quad h_A+h_B\leq 4\mu_G(A)<1. 
\]
Hence we may apply Proposition~\ref{prop: strong kneser} and conclude
\[
\mu_G(A+B)\geq \max\{\mu_G(A)+h_B, \mu_G(B)+h_A\}. 
\]
This can be rewritten as
\[
\mu_\T(I)\leq \mu_G(A+B)-\mu_G(B),\quad\text{and}\quad \mu_\T(J)\leq \mu_G(A+B)-\mu_G(A).
\]
Together with the fact that $A\subseteq \psi^{-1}(I)$ and $B\subseteq\psi^{-1}(J)$ finishing the proof. 
\end{proof}

We remark that Proposition~\ref{prop: strong kneser} allow disconnected kernels, which is essential in the argument.  For the universal solenoid $G=\widehat{\mathbb Q}$, every nonzero character corresponds to $q\in\mathbb Q$ and its kernel has dual $\mathbb Q/q\Z$, which has torsion. Thus every such kernel is disconnected. Neither the Gleason--Yamabe kernel nor the final character kernel is required to be connected in our proof.

\section{Proof of the asymmetric projection theorem}
\label{sec:projection-proof}

\begin{theorem}
\label{thm:projection}
For every $\tau>0$ there is a constant $\eta_\tau>0$ with the following property.  Let $0\to H\to G\xrightarrow{\pi} Q\to 0$ be an exact sequence of compact connected abelian groups. Suppose $A,B\subseteq G$ are compact and
\[
 0<\tau^{-1}\mu_G(A)\leq\mu_G(B)\leq\mu_G(A)\leq\eta_\tau.
\]
Suppose
\[
\mu_G(A+B)<\mu_G(A)+2\mu_G(B). 
\]
Then we have either $\pi(A)+\pi(B)=Q$, or
\[
\mu_Q(\pi(A)+\pi(B))\leq 2\mu_G(A+B)-\mu_G(A)-\mu_G(B). 
\]
\end{theorem}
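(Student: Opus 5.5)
The plan is to first apply Theorem~\ref{thm: main Freiman}, taking $\eta_\tau$ no larger than the constant there. This gives a continuous surjective homomorphism $\psi:G\to\T$ and compact arcs $I,J\subseteq\T$ with $A\subseteq\psi^{-1}(I)$ and $B\subseteq\psi^{-1}(J)$, where
\[
\mu_\T(I)\leq\mu_G(A+B)-\mu_G(B)<\mu_G(A)+\mu_G(B)\leq 2\mu_G(A),\qquad \mu_\T(J)\leq\mu_G(A+B)-\mu_G(A)<2\mu_G(B).
\]
Everything then rests on how $\psi$ interacts with $H=\ker\pi$. Since $H$ is connected, $\psi(H)$ is a connected closed subgroup of $\T$, so $\psi(H)=\{0\}$ or $\psi(H)=\T$. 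I split into these two cases.

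\emph{Case $\psi(H)=\{0\}$.} Then $\psi$ factors as $\psi=\rho\circ\pi$ for a continuous surjective homomorphism $\rho:Q\to\T$. It is continuous because $\pi$ is a quotient map. Hence $\pi(A)\subseteq\rho^{-1}(I)$ and $\pi(B)\subseteq\rho^{-1}(J)$, which gives
\[
\pi(A)+\pi(B)\subseteq\rho^{-1}(I+J).
\]
The set $I+J$ is an arc of measure at most $\mu_\T(I)+\mu_\T(J)$. Since $\rho$ is a surjective character of a compact group, $\rho$ pushes $\mu_Q$ forward to $\mu_\T$. Therefore
\[
\mu_Q(\pi(A)+\pi(B))\leq\mu_\T(I)+\mu_\T(J)\leq 2\mu_G(A+B)-\mu_G(A)-\mu_G(B).
\]

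\emph{Case $\psi(H)=\T$.} Here I aim to show $\pi(A)+\pi(B)=Q$. Take a coset $x+H$. The map $h\mapsto\psi(x+h)$ is a translate of the surjective character $\psi|_H$, so it pushes $\mu_H$ forward to $\mu_\T$. Consequently
\[
\mu_H\bigl((A-x)\cap H\bigr)\leq\mu_\T(I),
\]
and this fiber is empty unless $\pi(x)\in\pi(A)$. The quotient integral formula then gives $\mu_G(A)\leq\mu_\T(I)\,\mu_Q(\pi(A))$. Combined with $\mu_\T(I)<2\mu_G(A)$, this yields $\mu_Q(\pi(A))>1/2$. In the same way, $\mu_\T(J)<2\mu_G(B)$ yields $\mu_Q(\pi(B))>1/2$. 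For any $q\in Q$, the sets $\pi(A)$ and $q-\pi(B)$ both have measure greater than $1/2$, so they intersect. Hence $\pi(A)+\pi(B)=Q$, which is the first alternative of the theorem. Alternatively, Kneser's inequality in the connected group $Q$ gives the same conclusion.

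The main obstacle I expect is making the second case rigorous. One has to justify that a translate of a surjective character on $H$ pushes $\mu_H$ to $\mu_\T$, and one has to apply the quotient integral formula to the indicator of a compact set rather than a $C_c$ function, via regularity. It also uses the strict inequalities $\mu_\T(I)<2\mu_G(A)$ and $\mu_\T(J)<2\mu_G(B)$, which come from the strict hypothesis $\mu_G(A+B)<\mu_G(A)+2\mu_G(B)$ together with $\mu_G(B)\leq\mu_G(A)$. The first case is routine once the factorization of $\psi$ through $Q$ is established.
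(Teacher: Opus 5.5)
Your proof is correct. Its skeleton matches the paper's: apply Theorem~\ref{thm: main Freiman}, use connectedness of $H$ to get $\psi(H)\in\{\{0\},\T\}$, bound each fiber of $A$ by $\mu_\T(I)$ and integrate to get $\mu_G(A)\le\mu_\T(I)\,\mu_Q(\pi(A))$, and in the good case descend to $\rho:Q\to\T$ and bound $\mu_Q(\pi(A)+\pi(B))$ by $\mu_\T(I+J)$.

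The one real difference is the case $\psi(H)=\T$, and there your version is the sound one. The paper tries to rule this case out by contradiction. It asserts ``by the assumption'' that $\mu_Q(\pi(A))<\varepsilon_\tau<1/4$, and then reads off $\mu_G(A)\le\mu_Q(\pi(A))\mu_\T(I)<2\mu_Q(\pi(A))\mu_G(A)<\mu_G(A)$. However, only $\mu_G(A)$ is assumed small, not $\mu_Q(\pi(A))$, and the case genuinely occurs. Take $G=\T^2$, $\pi(x,y)=x$, and $A=B=\T\times[0,\delta]$. Then $\mu_G(A+A)=2\delta<3\delta$. Every character carrying $A$ into a short arc has the form $(x,y)\mapsto ny$, so it is nonzero on $H=\{0\}\times\T$, while $\pi(A)=\T$.

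Your treatment turns the same inequality, together with the strict bounds $\mu_\T(I)<2\mu_G(A)$ and $\mu_\T(J)<2\mu_G(B)$, into $\mu_Q(\pi(A)),\mu_Q(\pi(B))>1/2$. It therefore lands in the first alternative $\pi(A)+\pi(B)=Q$, which is exactly what the dichotomy in the statement is for. So it is the paper's argument that needs your fix, not the other way around.

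The technical points you flag are standard, and the paper uses them without comment: the pushforward of $\mu_H$ under a translated surjective character, and the quotient integral formula for indicators of compact sets. The range $0<\tau<1$ is vacuous, so restricting to $\tau\ge1$ when invoking Theorem~\ref{thm: main Freiman} costs nothing.
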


We now prove Theorem~\ref{thm:projection}.  For every $\tau\geq1$, we will choose
\[
 0<\varepsilon_\tau<\min\{\eta_\tau,1/4\},
\]
where $\eta_\tau$ is the constant in Theorem~\ref{thm: main Freiman}.

\begin{proof}[Proof of Theorem~\ref{thm:projection}]
By the assumption we have
\[
 0<\mu_G(A)\leq\mu_Q(\pi(A))<\varepsilon_\tau\leq\eta_\tau.
\]
We then apply Theorem~\ref{thm: main Freiman} to $A,B\subseteq G$, which gives a surjective character $\chi:G\to\T$ and compact arcs $I,J\subseteq\T$ such that $A\subseteq\chi^{-1}(I)$, $B\subseteq\chi^{-1}(J)$, and 
\[
\mu_\T(I)\leq \mu_G(A+B)-\mu_G(B),\qquad \mu_\T(J)\leq \mu_G(A+B)-\mu_G(A). 
\]

We claim that $\chi|_H = 0$. As otherwise, the connectedness of $H$ gives $\chi(H)=\T$. On the coset $g+H$, as $A\subseteq\chi^{-1}(I)$, 
\[
(A-g)\cap H \subseteq (\chi|_H)^{-1}(I - \chi(g)).
\]
Hence we have
\[
\int_H \one_A(g + h)\dd\mu_H(h)\leq \mu_\T(I). 
\]
Hence by quotient integral formula,
\[
\mu_G(A)\leq \mu_Q(\pi(A))\mu_\T(I)<2\mu_Q(\pi(A))\mu_G(A)<\mu_G(A).
\]

Therefore, $H\leq\ker\chi$ and $\chi$ descends to a character $\overline{\chi}:Q\to\T$. It is surjective as $\chi$ is surjective. Moreover, $\pi(A)\subseteq \overline{\chi}^{-1}(I)$ and $\pi(B)\subseteq \overline{\chi}^{-1}(J)$. This implies that $\pi(A)+\pi(B)\subseteq \overline{\chi}^{-1}(I+J)$. By quotient integral formula the pushforward of $\mu_Q$ under $\overline{\chi}$ is the normalized Haar measure, then
\[
\mu_Q(\pi(A)+\pi(B))\leq \mu_\T(I+J)\leq 2\mu_G(A+B)-\mu_G(A)-\mu_G(B)
\]
as claimed.

When $A=B$ this implies $\mu_Q(\pi(A)+\pi(A))\leq 2(\mu_G(A+A)-\mu_G(A))\leq 2(K-1)\mu_G(A)\leq 2(K-1)\mu_Q(\pi(A))$. 
\end{proof}

The following proposition shows that Theorem~\ref{thm:projection} is sharp for all $2\leq K<3$. The construction is essentially an interval union a Cantor set. 

\begin{proposition}\label{prop: sharp}
For every $2\leq K<3$ and every $\varepsilon>0$, there exist an exact
sequence
\[
 0\to\T\xrightarrow{\ell}\T^2
 \xrightarrow{\pi}\T\to0
\]
with $\ell(t)=(0,t)$ and $\pi(x,y)=x$, and a compact set $A\subseteq\T^2$ such that $0<\mu_{\T^2}(A) = \mu_\T(\pi(A))<\varepsilon$, $\mu_{\T^2}(A+A)=K\mu_{\T^2}(A)$, and
\[
 \mu_\T(\pi(A)+\pi(A)) =(2K-2)\mu_\T(\pi(A))=2\bigl(\mu_{\T^2}(2A)-\mu_{\T^2}(A)\bigr).
\]
\end{proposition}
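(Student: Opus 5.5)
The plan is to build $A$ as a ``thick'' strip over an interval together with a ``thin'' measure-zero piece sitting over a Cantor set. The thin piece contributes nothing to $\mu_{\T^2}(A)$, but it contributes a full interval to $\pi(A)+\pi(A)$.

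\textbf{Why $A$ cannot be a product.} The requirement $\mu_{\T^2}(A)=\mu_\T(\pi(A))$ forces almost every nonempty fiber of $A$ over $\pi(A)$ to be the whole circle. So $A$ cannot be a product of a short interval with a short arc, since that would preserve the doubling ratio. The only room left is in a null set of fibers, and that is where the extra growth of the projected sumset will come from.

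\textbf{The construction.} Let $D_0\subseteq[0,1]$ be the middle-thirds Cantor set. We use the classical fact $D_0+D_0=[0,2]$; it can be checked via ternary digits or the standard nested-interval argument. Given $2\le K<3$ and $\varepsilon>0$, choose $0<a<\min\{\varepsilon,1/8\}$ and put $s=(K-2)a\in[0,a)$. Let $D=sD_0\subseteq[0,s]$; when $K=2$ take $D=\{0\}$. Then $D$ is compact, $\mu_\T(D)=0$ and $D+D=[0,2s]$. Define
\[
A=\bigl([0,a]\times\T\bigr)\cup\bigl((a+D)\times\{0\}\bigr)\subseteq\T^2 .
\]
This set is compact. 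Its projection is $\pi(A)=[0,a]\cup(a+D)$, and $\mu_{\T^2}(A)=\mu_\T(\pi(A))=a<\varepsilon$, because $a+D$ is null in both $\T$ and $\T^2$.

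\textbf{Computing the sumsets.} The three pieces of the sumset in $\T^2$ are:
\begin{itemize}
\item $[0,2a]\times\T$;
\item $[a,2a+s]\times\T$;
\item $(2a+D+D)\times\{0\}=[2a,2a+2s]\times\{0\}$.
\end{itemize}
The last piece is null in $\T^2$. So $\mu_{\T^2}(A+A)=\mu_\T([0,2a+s])=2a+s=Ka$.

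The projected sumset is
\[
\pi(A)+\pi(A)=[0,2a]\cup[a,2a+s]\cup[2a,2a+2s]=[0,2a+2s].
\]
Its measure is $2a+2s=(2K-2)a$. Since $\mu_{\T^2}(2A)-\mu_{\T^2}(A)=(K-1)a$, this equals $2\bigl(\mu_{\T^2}(2A)-\mu_{\T^2}(A)\bigr)$, as required. Everything lives in $[0,4a]\subseteq[0,1/2)$, so no wrap-around in $\T$ affects these computations.

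\textbf{Main obstacle.} The only nontrivial input is a compact null set $D$ with $D+D$ an interval of prescribed length $2s$. The scaled Cantor set handles this, so I expect the main care to be in verifying $D_0+D_0=[0,2]$ and in confirming the interval overlaps that make the unions above contiguous. The latter hold because $s<a$, so that $[a,2a+s]$ overlaps $[0,2a]$ and $[2a,2a+2s]$ starts inside it.
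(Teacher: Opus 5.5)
Your proof is correct and is essentially the paper's construction: a strip over an interval times $\T$, plus a null scaled Cantor set on the fiber $\{0\}$ whose self-sum fills out the projected sumset. The only difference is placement. The paper takes $C=(K-1)h\,\mathcal C$ starting at $0$ and needs the gap bound $(K-1)h/3<h$ to get $I+C=[0,Kh]$, whereas you attach the smaller copy $a+(K-2)aD_0$ at the right end of $[0,a]$, so that $[0,a]+D=[0,a+s]$ follows from the endpoints $0,s\in D$ alone (since $s<a$); your choice $a<1/8$ also makes explicit the no-wrap-around condition that the paper leaves implicit.
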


\begin{proof}
Let $0<h<\varepsilon$. Consider a non-wrapping lift of $\T$ to $\R$ and let $\mathcal{C}\subseteq[0,1]$ be the middle $1/3$ Cantor set. Define $I=[0,h]$ and $C=(K-1)h \mathcal{C}\subseteq [0,(K-1)h]$ subsets of $\R$, and $P=I\cup C$. Now we project $I,C,P$ to $\T$. As $C$ is a set of measure $0$, we have $\mu_\T(P)=h$. As gaps in $\mathcal C$ of length at most $(K-1)h/3<h$, we have
\[
[0, (K-1)h+h]\subseteq\bigcup_{x\in C} x+I.
\]
As $C+C=[0,2(K-1)h]$, we have 
\[
I+C=[0,h+(K-1)h],\qquad I+P = [0, h+(K-1)h],\qquad P+P=[0,2(K-1)h]. 
\]
Now we define
\[
A = (I\times \T) \cup (P\times \{0\}). 
\]
Then $\pi(A)=P$, and 
\[
\mu_{\T^2}(A) = h = \mu_\T(\pi(A)). 
\]
Moreover, $A+A = ((I+P)\times\T)\cup((P+P)\times\{0\})$. Then $\mu_{\T^2}(A+A)=h+(K-1)h = K \mu_{\T^2}(A)$, and
\[
\mu_\T(\pi(A)+\pi(A)) = 2(K-1)h = 2(\mu_{\T^2}(A+A)-\mu_{\T^2}(A))
\]
as claimed. 
\end{proof}

We would like to remark that for $5/2<K<3$, the doubling in the projection given by Theorem~\ref{thm:projection} is $2(K-1)>3$. Therefore the $3k-4$ theorem cannot be invoked on $\pi(A)$. Also the fiber $C\times\{0\}$ in the construction is a measure $0$ set, but it is essential for the doubling and the projection, hence any argument which discards zero measure fibers cannot prove an exact containment theorem.

We finish the section by recording another example showing that connectedness of the kernel in Theorem~\ref{thm:projection} is necessary.

\begin{proposition}\label{prop: finite cover}
For every integer $n\geq3$ and every $\varepsilon>0$, there exist a compact set $A\subseteq\T$ and the covering homomorphism $\pi_n:\T\to\T$ where $\pi_n(x)=nx$, such that $0<\mu_\T(\pi_n(A))<\varepsilon$, and $2<\mu_\T(A+A)/\mu_\T(A)<3$, but
\[
 \mu_\T(\pi_n(A)+\pi_n(A)) >2\big(\mu_\T(A+A)-\mu_\T(A)\big).
\]
\end{proposition}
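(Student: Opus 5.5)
The plan is to take a set $A$ that lives inside a very short arc of $\T$, so that $\pi_n$ acts on it simply as dilation by $n$ with no wrap-around. Dilation by $n$ multiplies every measure by $n$, and so it multiplies the left-hand side $\mu_\T(\pi_n(A)+\pi_n(A))$ by $n$. The right-hand side $2(\mu_\T(A+A)-\mu_\T(A))$ is computed in the source group and does not scale. Since the dilation factor $n$ is at least $2$, the inequality should hold as soon as $A$ is close to an interval and its doubling lies strictly between $2$ and $3$. Points in the kernel of $\pi_n$ play no role here, so I will use a plain two-interval construction.

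\textbf{The set.} Fix parameters $h,\delta,g>0$ with $0<g<\min\{h,\delta\}$ and $g<(h+\delta)/2$. Put $L=h+g+\delta$ and assume $L<1/(2n)$. Define
\[
A=[0,h]\cup[h+g,\,L]\subseteq\T,
\]
where the intervals are first taken in $\R$ and then projected to $\T$. Then $\mu_\T(A)=h+\delta$.

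\textbf{Computing $A+A$.} In $\R$, the sumset $A+A$ is the union
\[
[0,2h]\cup[h+g,\,2h+g+\delta]\cup[2h+2g,\,2L].
\]
Because $g<h$ and $g<\delta$, consecutive pieces overlap, so $A+A=[0,2L]$. Since $2L<1$, this interval does not wrap in $\T$, and hence
\[
\mu_\T(A+A)=2L=2(h+\delta)+2g .
\]
The doubling is therefore $2+2g/(h+\delta)$, which lies strictly between $2$ and $3$ by the choice $g<(h+\delta)/2$. It follows that
\[
2\bigl(\mu_\T(A+A)-\mu_\T(A)\bigr)=2(h+\delta)+4g .
\]

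\textbf{Computing the projection.} Because $nL<1/2$, the map $x\mapsto nx$ is injective on $[0,L]$ and carries it onto $[0,nL]$. So $\pi_n(A)$ is the dilate $nA$, viewed without wrap-around, and $\mu_\T(\pi_n(A))=n(h+\delta)$. The sumset $\pi_n(A)+\pi_n(A)$ is the dilate $n(A+A)=[0,2nL]$, which still does not wrap because $2nL<1$. Hence
\[
\mu_\T(\pi_n(A)+\pi_n(A))=2nL=2n(h+\delta)+2ng .
\]
Since $n\geq 3>2$, we get $2n(h+\delta)+2ng>2(h+\delta)+4g$, which is the required strict inequality.

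\textbf{Choice of parameters.} Finally, take $h=\delta$ small enough that $n(h+\delta)<\varepsilon$ and $3h<1/(2n)$, and take $g=h/2$. These choices satisfy every constraint above. The one point that needs care is checking that no wrap-around occurs in $\T$, both for $A+A$ and for $\pi_n(A)+\pi_n(A)$. The condition $2nL<1$ secures this, and this is the only place where the smallness of the measure is used.
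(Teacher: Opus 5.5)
Your proof is correct and follows essentially the same route as the paper: put $A$ inside a short arc so that $\pi_n$ acts as a non-wrapping dilation by $n$. This multiplies $\mu_\T(\pi_n(A)+\pi_n(A))$ by $n$, while $2(\mu_\T(A+A)-\mu_\T(A))$ is unchanged. The only difference is the choice of set: the paper uses an interval plus a single point, $A=[0,a]\cup\{p\}$ with $a<p<2a$, whereas you use two intervals separated by a small gap, and your construction works equally well.
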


\begin{proof}
We define
\[
A = [0,a]\cup \{p\} \subseteq\T.
\]
Here $a<p<2a$, and $np<1/2$. We further assume $na<\varepsilon$. Hence there is no wrapping around and
\[
A+A = [0,a+p]\cup \{2p\}.
\]
Therefore $\mu_\T(A+A)= a+p = (1+p/a)\mu_\T(A)$.

On the other hand, 
\[
 \pi_n(A)=[0,na]\cup\{np\},
 \qquad
 \pi_n(A)+\pi_n(A) =[0,n(a+p)]\cup\{2np\},
\]
and hence $ \mu_\T(\pi_n(A))=na$, and $\mu_\T(\pi_n(A)+\pi_n(A))=n(a+p)$. 
A direct calculation gives the conclusion. 
\end{proof}

\section{Popular sumsets}

In this section we consider variations of Theorem~\ref{thm: main Freiman}. The extra input is the following version of arithmetic removal lemma by Candela--Szegedy--Vena~\cite[Theorem~1.3]{CSV} for locally compact abelian groups.

\begin{fact}\label{fact:pop removal}
There is a function $\mathcal R:(0,1)\to(0,1]$ with the following property.  Let $0<r<1$, let $G$ be a compact Hausdorff abelian group, and let $U,V,W\subseteq G$ be Borel sets.  If
\begin{equation*}
 \int_{G^2}\one_U(x)\one_V(y)\one_W(x+y) \dd\mu_G(x)\dd\mu_G(y)\leq\mathcal R(r),
\end{equation*}
then there are Borel sets
\[
 R_U\subseteq U,\qquad R_V\subseteq V,\qquad R_W\subseteq W,
\]
with $\mu_G(R_U),\mu_G(R_V),\mu_G(R_W)\leq r$ such that
\begin{equation*}
 \bigl((U\setminus R_U)+(V\setminus R_V)\bigr)
       \cap(W\setminus R_W)=\varnothing.
\end{equation*}
\end{fact}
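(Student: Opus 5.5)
Since this is a cited fact, I would prove it along the lines of Green's arithmetic removal lemma, adapted to a compact abelian group through Fourier analysis on Bohr sets. The aim is that every bound depends only on $r$ and not on $G$. Write
\[
T(f,g,k)=\int_{G^2} f(x)g(y)k(x+y)\dd\mu_G(x)\dd\mu_G(y).
\]
The first step is an arithmetic regularity lemma: given $\delta>0$ and a growth function $F$, there exist $d\leq M(\delta,F)$, characters $\gamma_1,\dots,\gamma_d\in\widehat G$, and a radius $\rho\geq \rho_0(\delta,F)$ such that the following holds. With $\mathcal B=\mathrm{Bohr}(\gamma_1,\dots,\gamma_d;\rho)$ a regular Bohr set, each of $\one_U,\one_V,\one_W$ splits as $f_{\mathrm{str}}+f_{\mathrm{sml}}+f_{\mathrm{unf}}$. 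Here $f_{\mathrm{str}}=\one_X*\beta$ with $\beta$ the normalized indicator of $\mathcal B$ (or a smoothed version), $\|f_{\mathrm{sml}}\|_2\leq\delta$, and $\|\widehat{f_{\mathrm{unf}}}\|_\infty\leq 1/F(d,\rho^{-1})$. I would prove this by the usual energy increment: whenever the uniform part has a large Fourier coefficient, add that character to the frequency set. The $L^2$ energy of the projections is bounded by $1$ and rises by $\gg\delta^{O(1)}$ at each step, so the iteration stops after boundedly many steps. Everything here is $L^2(\mu_G)$ Parseval on $\widehat G$, so Borel measurability is enough.

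The second step is a counting lemma. Since $|T(f,g,k)|\leq\|\widehat f\|_\infty\|g\|_2\|k\|_2$, the uniform parts contribute at most $3/F$ to the count. The small parts contribute $O(\delta)$ by Cauchy--Schwarz. Hence $T(\one_U,\one_V,\one_W)=T(U_{\mathrm{str}},V_{\mathrm{str}},W_{\mathrm{str}})+O(\delta)$. For the removal itself, set $R_U=\{x\in U: \one_U*\beta(x)<\delta^{1/2}\}$, up to an exceptional set where the small part is large, and define $R_V,R_W$ in the same way. Markov's inequality on the small parts, together with Chebyshev applied to $\one_U(1-\one_U*\beta)$, gives $\mu_G(R_U)\ll\delta^{1/4}\leq r$ once $\delta=\delta(r)$ is chosen small.

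The third step is the supersaturation argument, which is by contradiction. Suppose $x\in U\setminus R_U$, $y\in V\setminus R_V$ and $x+y\in W\setminus R_W$. Then $U$, $V$ and $W$ have density at least $\delta^{1/2}$ on the Bohr neighbourhoods $x+\mathcal B$, $y+\mathcal B$ and $x+y+\mathcal B$. Regularity of $\mathcal B$ gives near-invariance $\mu_G((\mathcal B+\mathcal B_{\epsilon})\triangle\mathcal B)\ll\epsilon d\,\mu_G(\mathcal B)$. Using a narrower Bohr set $\mathcal B'$ for the variables $x',y'$, one gets $T(U_{\mathrm{str}},V_{\mathrm{str}},W_{\mathrm{str}})\gg\delta^{3/2}\mu_G(\mathcal B')^2$. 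The measure bound $\mu_G(\mathrm{Bohr}(\gamma_1,\dots,\gamma_d;\rho))\geq\rho^d$ holds in any compact abelian group by pigeonholing $G\to\T^d$, and it is uniform in $G$. This gives $T(\one_U,\one_V,\one_W)\geq c(r)$ for a constant independent of $G$, and we take $\mathcal R(r)<c(r)$.

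I expect the main obstacle to be the bookkeeping between the two scales. The uniformity threshold $F$ must beat $\rho'^{\,2d}$, which is the measure scale of the narrow Bohr set, while the Bohr-set parameters themselves depend on $F$. The standard remedy is to let $F$ be a fast-growing function of $(d,\rho^{-1})$ and to choose regular Bohr sets via the Bourgain regularity lemma, which only costs constant factors in $\rho$. If this becomes too delicate, my fallback is the Candela--Szegedy--Vena route: pass to an ultraproduct of counterexamples, apply the measure-theoretic regularity there, and obtain the uniform $\mathcal R$ by compactness.
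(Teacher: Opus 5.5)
Your main route has a genuine gap in the third step, and it is not the obstacle you flagged. The counting lemma you state is global, $T(\one_U,\one_V,\one_W)=T(U_{\mathrm{str}},V_{\mathrm{str}},W_{\mathrm{str}})+O(\delta)+O(1/F)$. Your supersaturation bound, however, is local: $T(U_{\mathrm{str}},V_{\mathrm{str}},W_{\mathrm{str}})\gg\delta^{3/2}\mu_G(\mathcal B')^2$. This main term is smaller than the $O(\delta)$ error coming from the small parts (already $\delta^{3/2}<\delta$), so no positive lower bound for $T(\one_U,\one_V,\one_W)$ follows.

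Decoupling the density threshold $c$ from $\delta$ does not help. You would need $\delta\ll c^3\mu_G(\mathcal B')^2$, but $d\le M(\delta,F)$ and $\rho\ge\rho_0(\delta,F)$ are determined only after $\delta$ is fixed, so the requirement is circular. A faster-growing $F$ repairs only the uniform error.

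The missing idea is to localize everything around the surviving solution $(x,y)$:
\begin{itemize}
\item Bound $T(\one_U,\one_V,\one_W)$ below by the weighted count $\iint\psi(x'-x)\psi(y'-y)\one_U(x')\one_V(y')\one_W(x'+y')\dd\mu_G(x')\dd\mu_G(y')$, with $\psi=\one_{\mathcal B'}*\one_{\mathcal B'}/\mu_G(\mathcal B')$. Since $\widehat\psi\ge0$ and $\sum_\gamma\widehat\psi(\gamma)=\psi(0)=1$, multiplying $U_{\mathrm{unf}}$ by a translate of $\psi$ keeps its Fourier coefficients below $1/F$.
\item Telescope, so that each error term has a single factor $\one_U-U_{\mathrm{str}}$ (or its analogue for $V$ or $W$) and $[0,1]$-valued companions.
\item Add to $R_U$ the points where $|U_{\mathrm{sml}}|*\psi>\sigma\mu_G(\mathcal B')$, and do the same for $V$. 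Add to $R_W$ the points where $|W_{\mathrm{sml}}|*\psi*\psi>\sigma\mu_G(\mathcal B')^2$.
\end{itemize}
By Markov, each added set has measure at most $\delta/\sigma$, independently of $d$. All errors are then $O(\sigma\mu_G(\mathcal B')^2+1/F)$, and the local main term $\gg c^3\mu_G(\mathcal B')^2$ beats them. This local structure is what Green's Bohr-set regularity lemma (regularity on most translates $x+\mathcal B$) builds in.

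Two further steps also need repair.

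\emph{The bound on $\mu_G(R_U)$.} The estimate $\mu_G(R_U)\ll\delta^{1/4}$ does not follow from Chebyshev applied to $\one_U(1-\one_U*\beta)$. That integral is not small: for a random-like $U$ of density $1/2$ it is about $1/4$. A covering bound for $\mu_G(\{x\in U:\one_U*\beta(x)<c\})$ loses the factor $\mu_G(\mathcal B+\mathcal B)/\mu_G(\mathcal B)$, which can be $2^d$, and this reintroduces the circularity. A $d$-free bound does follow from your decomposition, through the narrow scale. Put $E=\{\one_U*\beta<c\}$ and $E'=\{\one_U*\beta<2c\}$; regularity gives $\one_E\le\one_{E'}*\beta'$. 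Hence
\[
\mu_G(U\cap E)\le\int_{E'}\one_U*\beta'\le 2c+O(\epsilon d)+\delta+F^{-1}\mu_G(\mathcal B')^{-1/2},
\]
using $\|U_{\mathrm{unf}}*\beta'\|_2\le F^{-1}\|\beta'\|_2$.

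\emph{The energy increment.} The map $f\mapsto f*\beta$ is not an orthogonal projection, and $\|\one_U*\beta\|_2$ need not increase when the Bohr set shrinks. The increment therefore has to be run with the $\sigma$-algebra generated by the characters (averaging over $\bigcap_i\ker\gamma_i$), and the structured part smoothed afterwards.

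For comparison, the paper does not prove this statement; it imports it from Candela--Szegedy--Vena, whose argument goes through a symmetry-preserving removal lemma for invariant measurable hypergraphs. Your fallback points at what the paper actually uses. Your main route, a Fourier and Bohr-set argument that would give explicit tower-type bounds, becomes a proof only with the localization above.
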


\begin{lemma}\label{lem: pop compact subset}
Let $G$ be a compact abelian group, let $A,B\subseteq G$ be compact, and let $S\subseteq G$ be Borel. Suppose $0<r<1$ and
\begin{equation*}
 \int_{G\setminus S}\one_A*\one_B(z)\dd\mu_G(z)\leq\mathcal R(r).
\end{equation*}
Then there are compact sets $C\subseteq A$ and $D\subseteq B$ such that
\begin{equation*}
 \mu_G(A\setminus C)\leq2r,\qquad
 \mu_G(B\setminus D)\leq2r,\qquad
 \mu_G(C+D)\leq\mu_G(S)+r.
\end{equation*}
More precisely, there is a Borel set $R\subseteq G\setminus S$
with $\mu_G(R)\leq r$ such that $C+D\subseteq S\cup R$.
\end{lemma}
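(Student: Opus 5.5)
Apply Fact~\ref{fact:pop removal} with $U=A$, $V=B$, $W=G\setminus S$. The hypothesis gives
\[
\int_{G^2}\one_A(x)\one_B(y)\one_{G\setminus S}(x+y)\dd\mu_G(x)\dd\mu_G(y)=\int_{G\setminus S}\one_A*\one_B(z)\dd\mu_G(z)\leq\mathcal R(r),
\]
where the equality is Fubini together with translation invariance. The removal lemma then produces Borel sets $R_A\subseteq A$, $R_B\subseteq B$ and $R\subseteq G\setminus S$, each of measure at most $r$, such that
\[
\bigl((A\setminus R_A)+(B\setminus R_B)\bigr)\cap\bigl((G\setminus S)\setminus R\bigr)=\varnothing.
\]
Equivalently, $(A\setminus R_A)+(B\setminus R_B)\subseteq S\cup R$. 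This already gives the ``more precisely'' statement, except that $A\setminus R_A$ and $B\setminus R_B$ need not be compact.

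To get compactness we use inner regularity. The sets $A\setminus R_A$ and $B\setminus R_B$ are Borel subsets of the compact sets $A$ and $B$, and have finite measure. Haar measure on a compact group is inner regular on Borel sets of finite measure; in the nonmetrizable case one should double-check this for the Borel sets that actually arise, and this is the main technical point. Granting it, we choose compact sets $C\subseteq A\setminus R_A$ and $D\subseteq B\setminus R_B$ with
\[
\mu_G\bigl((A\setminus R_A)\setminus C\bigr)\leq r,\qquad \mu_G\bigl((B\setminus R_B)\setminus D\bigr)\leq r.
\]
Then $\mu_G(A\setminus C)\leq\mu_G(R_A)+r\leq 2r$, and likewise $\mu_G(B\setminus D)\leq 2r$. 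This accounts for the factor $2$ in the statement.

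Since $C+D\subseteq(A\setminus R_A)+(B\setminus R_B)\subseteq S\cup R$, we get $C+D\subseteq S\cup R$. Moreover $C+D$ is compact, being the image of $C\times D$ under addition, so it is measurable. Hence
\[
\mu_G(C+D)\leq\mu_G(S)+\mu_G(R)\leq\mu_G(S)+r.
\]

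The only real obstacle is the inner regularity step. If there is a concern about non-$\sigma$-compact Borel sets in a nonmetrizable $G$, there is a fallback. We first replace $R_A$ by an open set $O_A\supseteq R_A$ with $\mu_G(O_A)\leq 2r$, using outer regularity of Haar measure, and set $C=A\setminus O_A$. This $C$ is closed in $A$, hence compact, and satisfies $\mu_G(A\setminus C)\leq 2r$. We do the same for $B$ to get $D$. Since $C\subseteq A\setminus R_A$ and $D\subseteq B\setminus R_B$, the containment $C+D\subseteq S\cup R$ still holds. This route avoids inner regularity altogether, so I would actually use it as the primary argument.
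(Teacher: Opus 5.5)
Your proof is correct and follows the paper's argument. You rewrite the hypothesis via Fubini, apply the removal lemma to $A$, $B$, $G\setminus S$, and then use regularity to pass to compact $C\subseteq A\setminus R_A$, $D\subseteq B\setminus R_B$. The paper takes the inner-regularity step directly, and your worry there is unfounded: Haar measure on a compact group is a finite Radon measure, so it is inner regular on all Borel sets. Your fallback $C=A\setminus O_A$ is just the standard proof of that fact, so either version works.
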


\begin{proof}
Using Fubini's theorem and translation invariance we get
\begin{align*}
 &\int_{G^2}\one_A(x)\one_B(y)\one_{G\setminus S}(x+y)
                 \dd\mu_G(x)\dd\mu_G(y)\\
 &=\int_{G\setminus S}\int_G\one_A(x)\one_B(z-x)
                 \dd\mu_G(x)\dd\mu_G(z)
 =\int_{G\setminus S} \one_A*\one_B(z)\dd\mu_G(z)
 \leq\mathcal R(r).
\end{align*}
Apply Fact~\ref{fact:pop removal} to $A,B,G\setminus S$.  It gives Borel sets $R_A\subseteq A$, $R_B\subseteq B$, and $R\subseteq G\setminus S$, each of measure at most $r$, such that
\[
 \bigl((A\setminus R_A)+(B\setminus R_B)\bigr)
       \cap((G\setminus S)\setminus R)=\varnothing.
\]
By inner regularity, we may choose compact subsets $C\subseteq A\setminus R_A$ and $D\subseteq B\setminus R_B$ such that $\mu_G((A\setminus R_A)\setminus C)<r$ and $ \mu_G((B\setminus R_B)\setminus D)<r$
Consequently, $\mu_G(A\setminus C)\leq2r$ and $\mu_G(B\setminus D)\leq2r$. We also have
\[
 C+D\subseteq G\setminus((G\setminus S)\setminus R)=S\cup R.
\]
The sum $C+D$ is compact, so it is measurable.  This proves the desired bound. 
\end{proof}

\begin{proposition}\label{prop: pop transfer}
Let $\tau\geq1$, let $G$ be compact connected abelian, and let $A,B\subseteq G$ be compact. Suppose $0<r\leq \mu_G(B)/4$, and
\[
0<\tau^{-1}\mu_G(A)\leq \mu_G(B)\leq \mu_G(A)\leq\eta_\tau.
\]
Suppose also that $0<t\leq\mathcal R(r)$, and
\[
S_t(A,B)<\mu_G(A)+2\mu_G(B) - 7r
\]
Then there are compact sets $C\subseteq A$ and $D\subseteq B$ with $\mu_G(A\setminus C)\leq2r$ and $\mu_G(B\setminus D)\leq2r$ such that
\[
\mu_G(C+D)\leq S_t(A,B)+r<\mu_G(C)+\mu_G(D)+\min\{\mu_G(C),\mu_G(D)\},
\]
together with a continuous surjective homomorphism $\chi:G\to\T$, and compact arcs $I,J\subseteq\T$ with $C\subseteq\chi^{-1}(I)$, $D\subseteq\chi^{-1}(J)$, and
\[
\mu_G\bigl(A\setminus\chi^{-1}(I)\bigr)\leq2r,\qquad
   \mu_G\bigl(B\setminus\chi^{-1}(J)\bigr)\leq2r,
\]
and $\mu_\T(I)\leq S_t(A,B)-\mu_G(B)+3r$, $\mu_\T(J)\leq S_t(A,B)-\mu_G(A)+3r$.
\end{proposition}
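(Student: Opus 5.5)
Take $S=\mathscr S_t(A,B)$ and apply Lemma~\ref{lem: pop compact subset}. First bound the mass of $\one_A*\one_B$ off $S$. Off the popular sumset $\one_A*\one_B<t$, and $\one_A*\one_B$ vanishes outside $A+B$, so
\[
\int_{G\setminus S}\one_A*\one_B\dd\mu_G\leq t\,\mu_G(A+B)\leq t\leq \mathcal R(r).
\]
Lemma~\ref{lem: pop compact subset} then gives compact $C\subseteq A$ and $D\subseteq B$ with $\mu_G(A\setminus C)\leq 2r$, $\mu_G(B\setminus D)\leq 2r$ and $\mu_G(C+D)\leq S_t(A,B)+r$. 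Since $S=\mathscr S_t(A,B)$ is compact when $\one_A*\one_B$ is continuous (or at least Borel), the lemma applies.

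Next check the doubling hypothesis for $C,D$. From $\mu_G(C)\geq\mu_G(A)-2r$ and $\mu_G(D)\geq\mu_G(B)-2r$, together with $\min\{\mu_G(C),\mu_G(D)\}\geq\mu_G(B)-2r$ because $\mu_G(B)\leq\mu_G(A)$, we get
\[
\mu_G(C)+\mu_G(D)+\min\{\mu_G(C),\mu_G(D)\}\geq \mu_G(A)+2\mu_G(B)-6r>S_t(A,B)+r\geq\mu_G(C+D),
\]
using the hypothesis $S_t<\mu_G(A)+2\mu_G(B)-7r$. This is the displayed chain in the statement.

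To apply Theorem~\ref{thm: main Freiman}, let $C'$ be the larger and $D'$ the smaller of $C,D$. The measure condition needs $\mu_G(C')\leq\eta$ and a bounded ratio. We have $\mu_G(C')\leq\mu_G(A)\leq\eta_\tau$, and $\mu_G(D')\geq\mu_G(B)-2r\geq\mu_G(B)/2\geq\mu_G(A)/(2\tau)$ since $r\leq\mu_G(B)/4$. So the ratio parameter becomes $2\tau$. This is the main bookkeeping obstacle: the constant $\eta_\tau$ here should be taken as the one from Theorem~\ref{thm: main Freiman} at parameter $2\tau$, or equivalently one shrinks $\eta_\tau$ at the outset. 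I would arrange this by declaring $\eta_\tau$ in this section to be $\eta_{2\tau}$ of Theorem~\ref{thm: main Freiman}.

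The theorem then yields $\chi$ and arcs $I,J$ with $C\subseteq\chi^{-1}(I)$, $D\subseteq\chi^{-1}(J)$ and
\[
\mu_\T(I)\leq\mu_G(C+D)-\mu_G(D)\leq S_t+r-(\mu_G(B)-2r)=S_t-\mu_G(B)+3r,
\]
and similarly $\mu_\T(J)\leq S_t-\mu_G(A)+3r$. This holds whether or not the roles were swapped, since the theorem's conclusion is symmetric in form. Finally, $A\setminus\chi^{-1}(I)\subseteq A\setminus C$ has measure at most $2r$, and likewise $\mu_G(B\setminus\chi^{-1}(J))\leq 2r$.
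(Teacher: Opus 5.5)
Your proposal is correct and follows essentially the same route as the paper. You apply Lemma~\ref{lem: pop compact subset} with $S=\mathscr S_t(A,B)$, check the size, ratio (now $2\tau$) and growth hypotheses of Theorem~\ref{thm: main Freiman} for $C,D$, and transfer the arc bounds via $\mu_G(C)\geq\mu_G(A)-2r$ and $\mu_G(D)\geq\mu_G(B)-2r$; you also make explicit two points the paper leaves implicit, namely the removal-lemma hypothesis $\int_{G\setminus S}\one_A*\one_B\dd\mu_G\leq t\leq\mathcal R(r)$ and taking $\eta_\tau$ to be the constant of Theorem~\ref{thm: main Freiman} at parameter $2\tau$.
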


\begin{proof}
Apply Lemma~\ref{lem: pop compact subset} with $S=S_t(A,B)$. The resulting compact sets satisfy $\mu_G(A)-2r\leq \mu_G(C)\leq \mu_G(A)$, $\mu_G(B)-2r\leq \mu_G(D)\leq\mu_G(B)$, and
\[
 \mu_G(C+D)\leq S_t(A,B)+r.
\]
Note that the original order $\mu_G(B)\leq \mu_G(A)$ need not imply the order for $\mu_G(C)$ and $\mu_G(D)$. Let us now verify the hypothesis of Theorem~\ref{thm: main Freiman} for sets $C$ and $D$. 

As $r\leq \mu_G(B)/4$ and $\mu_G(B)\geq \mu_G(A)/\tau$,
\[
 0<\frac{1}{2\tau}\max\{\mu_G(C),\mu_G(D)\} \leq\frac{\mu_G(A)}{2\tau}\leq\frac{\mu_G(B)}{2} \leq \mu_G(B)-2r
 \leq\min\{\mu_G(C),\mu_G(D)\}.
\]
Also $\max\{\mu_G(C),\mu_G(D)\}\leq \mu_G(A)\leq\eta_\tau^{\rm pop}\leq\theta_{2\tau}$. For the growth hypothesis, observe that
\begin{align*}
 \mu_G(C)+\mu_G(D)+\min\{\mu_G(C),\mu_G(D)\}
 &\geq(\mu_G(A)-2r)+(\mu_G(B)-2r)+(\mu_G(B)-2r)\\
 &=\mu_G(A)+2\mu_G(B)-6r.
\end{align*}
Using this, we have
\[
 \mu_G(C+D)\leq S_t(A,B)+r< \mu_G(C)+\mu_G(D)+\min\{\mu_G(C),\mu_G(D)\}.
\]
Now we may apply Theorem~\ref{thm: main Freiman} with parameter $2\tau$. Then we obtain a continuous surjective $\chi:G\to\T$ and arcs $I,J\subseteq \T$ with $C\subseteq\chi^{-1}(I)$, $D\subseteq\chi^{-1}(J)$, and 
\[
\mu_\T(I)\leq\mu_G(C+D)-\mu_G(D), \qquad \mu_\T(J)\leq\mu_G(C+D)-\mu_G(C).
\]
Together with the above estimation, we conclude that
\[
 \mu_\T(I)\leq S_t(A,B)-\mu_\T(B)+3r,\qquad
 \mu_\T(J)\leq S_t(A,B)-\mu_\T(A)+3r.
\]
Finally, the exceptional sets are contained in $A\setminus C$ and $B\setminus D$, respectively. This proves the proposition.
\end{proof}

Now we are ready to prove Theorem~\ref{thm: popular Freiman}. 

\begin{proof}[Proof of Theorem~\ref{thm: popular Freiman}]
Choose $\eta_\tau$ as in Proposition~\ref{prop: finite cover}, and let $t_0 = \mathcal R(r)$, and
\[
 r=\frac{\alpha}{20\tau}\min\{\kappa,\varepsilon\}. 
\]
Note that all parameters are positive and independent of $G,A,B$. Since $\mu_G(B)\geq \mu_G(A)/\tau\geq\alpha/\tau$,
\[
 r\leq\frac {\mu_G(B)}{20}\min\{\kappa,\varepsilon\}<\frac {\mu_G(B)}{4},
 \qquad 7r\leq\frac7{20}\kappa \mu_G(B)<\kappa \mu_G(B).
\]
Consequently, the popular sumset hypothesis implies
\[
 S_t(A,B)+7r\leq \mu_G(A)+(2-\kappa)\mu_G(B)+7r<\mu_G(A)+2\mu_G(B).
\]
Proposition~\ref{prop: pop transfer} applies and gives a common character $\chi$ and arcs $I,J\subseteq\T$. Its exceptional measures are bounded by
\[
 2r\leq\frac{\varepsilon \mu_G(B)}{10}\leq\varepsilon \mu_G(B),
\]
and its two additional width errors can be bounded by
\[
 3r\leq\frac{3\varepsilon \mu_G(B)}{20}\leq\varepsilon \mu_G(B).
\]
Hence this finishes the proof. 
\end{proof}

Finally we prove Theorem~\ref{thm: pop truncated}.

\begin{proof}
Observe that pointwise $t\one_{S_t(A,B)}\leq\min\{\one_A*\one_B,t\}$.  Integration this inequality gives
\[
S_t(A,B)\leq \frac{1}{t}\int_G \min\{\one_A*\one_B(x),t\} \dd\mu_G(x).
\]
Thus Theorem~\ref{thm: popular Freiman} applies and the conclusion follows. 
\end{proof}

\bibliographystyle{amsalpha}
\bibliography{reference}

\providecommand{\bysame}{\leavevmode\hbox to3em{\hrulefill}\thinspace}
\providecommand{\MR}{\relax\ifhmode\unskip\space\fi MR }
% \MRhref is called by the amsart/book/proc definition of \MR.
\providecommand{\MRhref}[2]{%
  \href{http://www.ams.org/mathscinet-getitem?mr=#1}{#2}
}
\providecommand{\href}[2]{#2}
\begin{thebibliography}{CDR19}

\bibitem[Bil98]{Bilu}
Yuri Bilu, \emph{The {$(\alpha+2\beta)$}-inequality on a torus}, J. London
  Math. Soc. (2) \textbf{57} (1998), no.~3, 513--528. \MR{1659821}

\bibitem[CDR19]{CandelaDeRoton}
Pablo Candela and Anne De~Roton, \emph{On sets with small sumset in the
  circle}, Q. J. Math. \textbf{70} (2019), no.~1, 49--69. \MR{3927843}

\bibitem[CI22]{CI22}
Michael Christ and Marina Iliopoulou, \emph{Inequalities of {R}iesz-{S}obolev
  type for compact connected {A}belian groups}, Amer. J. Math. \textbf{144}
  (2022), no.~5, 1367--1435. \MR{4494185}

\bibitem[CSV16]{CSV}
Pablo Candela, Bal\'azs Szegedy, and Llu\'is Vena, \emph{On linear
  configurations in subsets of compact abelian groups, and invariant measurable
  hypergraphs}, Ann. Comb. \textbf{20} (2016), no.~3, 487--524. \MR{3537915}

\bibitem[Fol95]{Folland}
Gerald~B. Folland, \emph{A course in abstract harmonic analysis}, Studies in
  Advanced Mathematics, CRC Press, Boca Raton, FL, 1995. \MR{1397028}

\bibitem[Gle52]{Gleason}
Andrew~M. Gleason, \emph{Groups without small subgroups}, Ann. of Math. (2)
  \textbf{56} (1952), 193--212. \MR{49203}

\bibitem[Gri19]{Griesmer}
John~T. Griesmer, \emph{Semicontinuity of structure for small sumsets in
  compact abelian groups}, Discrete Anal. (2019), Paper No. 18, 46.
  \MR{4042161}

\bibitem[Jin21]{J}
Yifan Jing,
  \emph{\zh{紧阿贝尔群上最小扩张集的稳定性定理的新证明}},
  \zh{蛙鸣} \textbf{65} (2021), 106--111.

\bibitem[JM23]{JM}
Yifan Jing and Akshat Mudgal, \emph{Kemperman's inequality and {F}reiman's
  lemma via few translates}, 2023, Preprint, arXiv:2307.03066.

\bibitem[JT23]{JingTranKIP}
Yifan Jing and Chieu-Minh Tran, \emph{Measure growth in compact semisimple
  {L}ie groups and the {K}emperman inverse problem}, 2023, Preprint,
  arXiv:2303.15628.

\bibitem[Kem60]{KempermanKneser}
J.~H.~B. Kemperman, \emph{On small sumsets in an abelian group}, Acta Math.
  \textbf{103} (1960), 63--88. \MR{110747}

\bibitem[Kem64]{Kemperman}
\bysame, \emph{On products of sets in a locally compact group}, Fund. Math.
  \textbf{56} (1964), 51--68. \MR{202913}

\bibitem[Kne56]{Kneser}
Martin Kneser, \emph{Summenmengen in lokalkompakten abelschen {G}ruppen}, Math.
  Z. \textbf{66} (1956), 88--110. \MR{81438}

\bibitem[KPT24]{KPT}
Zuxiang Kong, Fei Peng, and Chieu-Minh Tran, \emph{Measure doubling in
  unimodular locally compact groups and quotients}, 2024, Preprint,
  arXiv:2411.17246.

\bibitem[Pon34]{Pontrjagin}
L.~Pontrjagin, \emph{The theory of topological commutative groups}, Ann. of
  Math. (2) \textbf{35} (1934), no.~2, 361--388. \MR{1503168}

\bibitem[Tao18]{TaoKneser}
Terence Tao, \emph{An inverse theorem for an inequality of {K}neser}, Proc.
  Steklov Inst. Math. \textbf{303} (2018), no.~1, 193--219, Published in
  Russian in Tr. Mat. Inst. Steklova {\bf 303} (2018), 209--238. \MR{3920221}

\bibitem[Yam53]{Yamabe}
Hidehiko Yamabe, \emph{A generalization of a theorem of {G}leason}, Ann. of
  Math. (2) \textbf{58} (1953), 351--365. \MR{58607}

\end{thebibliography}

\end{document}